\DeclareMathOperator{\im}{im} 
\DeclareMathOperator{\sign}{sign}
\DeclareMathOperator{\e}{e} 
\DeclareMathOperator{\diag}{diag}
\DeclareMathOperator{\rank}{rank}
\newcommand{\RR}{{\mathbb R}}
\newcommand{\NN}{{\mathbb N}}
\newcommand{\mal}{\circ}
\newcommand{\st}{\mid}
\newcommand{\dd}[2]{\frac{\text{d} #1}{\text{d} #2}}
\newcommand{\DD}[2]{\frac{\partial #1}{\partial #2}}
\newcommand{\ve}{\varepsilon}
\theoremstyle{plain}
\newtheorem{theorem}{Theorem}[section]
\newtheorem{proposition}[theorem]{Proposition}
\newtheorem{corollary}[theorem]{Corollary}
\newtheorem{lemma}[theorem]{Lemma}
\newtheorem*{descartes}{Descartes' rule of signs}
\theoremstyle{definition}
\newtheorem{definition}[theorem]{Definition}
\newtheorem{remark}[theorem]{Remark}
\newtheorem{example}[theorem]{Example}
\def\refi{(i)\@\xspace}
\def\refii{(ii)\@\xspace}
\def\refiii{(iii)\@\xspace}
\def\refsig{(sig)\@\xspace}
\def\refinj{(inj)\@\xspace}
\def\refdet{(det)\@\xspace}
\def\refmin{(min)\@\xspace}
\def\reflin{(lin)\@\xspace}
\def\refjac{(jac)\@\xspace}
\def\refbound{(bnd)\@\xspace}
\def\refsurj{(ex)\@\xspace}
\def\blfootnote{\xdef\@thefnmark{}\@footnotetext}
\begin{document}

\title{Sign conditions for injectivity of generalized polynomial maps \\
with applications to chemical reaction networks \\ and real algebraic geometry}

\renewcommand{\thefootnote}{\fnsymbol{footnote}}
\author{%
Stefan M\"uller\footnotemark[1], Elisenda Feliu\footnotemark[1], 
Georg Regensburger\footnotemark[1],\\
Carsten Conradi,
Anne Shiu, Alicia Dickenstein\footnotemark[2]
}

\blfootnote{
\scriptsize

\smallskip
\noindent
{\bf S. M\"uller} (\href{mailto:stefan.mueller@ricam.oeaw.ac.at}{stefan.mueller@ricam.oeaw.ac.at}).  
Johann Radon Institute for Computational and Applied Mathematics, Austrian Academy of Sciences, Altenbergerstra{\ss}e 69, 4040 Linz, Austria

\smallskip
\noindent
{\bf E. Feliu} (\href{efeliu@math.ku.dk}{efeliu@math.ku.dk}). 
Dept.\ of Mathematical Sciences, University of Copenhagen, Universitetsparken 5, 2100 Denmark 

\smallskip
\noindent
{\bf G. Regensburger} (\href{mailto:georg.regensburger@ricam.oeaw.ac.at}{georg.regensburger@ricam.oeaw.ac.at}).
Johann Radon Institute for Computational and Applied Mathematics, Austrian Academy of Sciences, Altenbergerstra{\ss}e 69, 4040 Linz, Austria

\smallskip
\noindent
{\bf C. Conradi} (\href{mailto:conradi@mpi-magdeburg.mpg.de}{conradi@mpi-magdeburg.mpg.de}).
Max-Planck-Institut Dynamik komplexer technischer Systeme, Sandtorstr.\ 1, 39106 Magdeburg, Germany 

\smallskip
\noindent
{\bf A. Shiu} (\href{mailto:annejls@math.tamu.edu}{annejls@math.tamu.edu}). 
Dept.\ of Mathematics, Mailstop 3368, Texas A\&M Univ., College Station, TX~77843--3368, USA

\smallskip
\noindent
{\bf A. Dickenstein} (\href{mailto:alidick@dm.uba.ar}{alidick@dm.uba.ar}).
Dto.\ de Matem\'atica, FCEN, Universidad de Buenos Aires, Ciudad Universitaria, Pab.\ I, C1428EGA Buenos Aires, Argentina, and IMAS/CONICET 

}

\footnotetext[1]{These authors contributed equally to this work.}
\footnotetext[2]{Corresponding author}

\date{\today}

\maketitle

\begin{abstract}
We give necessary and sufficient conditions in terms of sign vectors
for the injectivity of families of polynomial maps with arbitrary real exponents defined on the positive orthant.
Our work relates and extends existing injectivity conditions
expressed in terms of Jacobian matrices and determinants.
In the context of chemical reaction networks with power-law kinetics,
our results can be used to preclude as well as to guarantee  multiple positive steady states.
In the context of real algebraic geometry,
our work recognizes a prior result of Craciun, Garcia-Puente, and Sottile, 
together with work of two of the authors, as the first partial multivariate generalization of the classical Descartes' rule,
which bounds the number of positive real roots of a univariate real polynomial
in terms of the number of sign variations of its coefficients.

\medskip
{\bf Keywords: } 
sign vector, restricted injectivity,  power-law kinetics, Descartes' rule of signs, oriented matroid
\end{abstract}

\section{Introduction}
In many fields of science,
the analysis of parametrized systems by way of sign vectors has a long history. 
In economics,
market models depend on monotonic price and demand curves,
leading to the theory of sign-solvable linear systems \cite{sign-023,sign-009}.
In electronics,
devices such as diodes, transistors, and operational amplifiers are characterized by monotonic functions,
and one studies whether the input-output relation of an electronic circuit is well posed,
using the theory of oriented matroids~\cite{lin-018,Chaiken1996}.
In many settings, uniqueness of positive solutions is a desirable property, but deciding this is difficult in general~\cite{CGS, tackling}.  
If, however, the maps of interest are injective, then this precludes multiple solutions. 

Motivated by applications to chemical reaction networks and real algebraic geometry,
we characterize injectivity of parametrized families of polynomial maps with arbitrary real exponents, 
in terms of sign vectors.
Our work builds on results from chemical engineering,
by abstracting, relating, and extending existing injectivity conditions expressed in terms of Jacobian matrices and determinants.

The relevant literature from the theory of chemical reaction networks is discussed in Subsection~\ref{subsec:crnt}.
The main application  
to real algebraic geometry is addressed in Subsection~\ref{subsec:rag}.

\subsection{Statement of the main theorem}

Throughout this paper we consider families of maps defined on the positive orthant,
associated with two real matrices of coefficients and exponents, respectively, and a vector of positive parameters.
\begin{definition} \label{def:main_gen}
Let $A = (a_{ij}) \in \RR^{m\times r}$, $B= (b_{ij}) \in\RR^{r\times n}$, and $\kappa \in \RR_+^r$.
We define the associated {\em generalized polynomial map} $f_\kappa \colon \RR_+^n \to \RR^m$ as
\begin{equation*}
f_{\kappa,i}(x) = \sum_{j =1}^r a_{ij} \, \kappa_j \, x_1^{b_{j1}} \cdots x_n^{b_{jn}}, \quad i = 1, \dots, m .      
\end{equation*}
\end{definition}
The term {\em generalized} indicates that we allow polynomials with real exponents.
In the literature, generalized polynomials occur under other names.
For instance, they are called \emph{signomials} in geometric programming~\cite{Boydetal2007}.

We often use a more compact notation.
By introducing $A_\kappa \in \RR^{m \times r}$ as $A_\kappa = A \diag(\kappa)$
and $x^B \in \RR^r_+$ via $(x^B)_j = x_1^{b_{j1}} \cdots x_n^{b_{jn}}$ for $j=1,\ldots,r$,
we can write
\begin{equation} \label{eq:map_f}
f_\kappa(x) = A_\kappa \, x^B .
\end{equation}

A generalized polynomial map $f_\kappa \colon \RR_+^n \to \RR^n$ \eqref{eq:map_f} with $A \in \RR^{n \times r}$ and $B \in \RR^{r \times n}$,
induces a system of ordinary differential equations (ODEs)
called a {\em power-law system}:
\begin{equation} \label{eq:diffeq}
\dd{x}{t} = f_\kappa(x).
\end{equation}
For any initial value $x_0\in \RR^n_+$, 
the solution is confined to the coset $x_0+S_\kappa$,
where $S_\kappa$ is the smallest vector subspace containing the image of $f_\kappa$.
Hence, when studying positive steady states of (\ref{eq:diffeq}),
one is in general interested in the positive solutions to the equation $f_\kappa(x) = 0$ within cosets $x^\prime + S_\kappa$ with $x^\prime \in \RR^n_+$. 
Due to the form of $f_\kappa$, one has $S_\kappa \subseteq S$ where $S = \im(A)$.
In many applications, $S_\kappa = S$ for all $\kappa \in \RR_+^r$, for example, if the rows of $B$ are distinct.
If $f_\kappa$ is injective on  $(x^\prime + S)\cap \RR^n_+$, then $f_\kappa(x) \neq f_\kappa(y)$ for all distinct $x$, $y\in (x^\prime+S)\cap \RR^n_+$,
and hence the coset $x^\prime +S$ contains at most one positive steady state.
Clearly, for a vector subspace $S$ of $\RR^n$, two vectors $x$, $y \in \RR^n$ lie in $x^\prime +S$ for some $x'\in \RR^n$, if and only if $x-y\in S$.
This motivates the following definition of injectivity with respect to a subset.
 
\begin{definition} \label{def:main_inj}
Given two subsets $\Omega, S \subseteq \RR^n$, a function $g$ defined on $\Omega$ is called \emph{injective with respect to} $S$
if $x,y \in \Omega$, $x \neq y$, and $x-y \in S$ imply $g(x) \neq g(y)$.
\end{definition}

We will in general consider functions defined on the positive orthant, that is, $\Omega = \RR_+^n$.
When $S$ is a vector subspace, injectivity with respect to $S$ is equivalent to injectivity on every coset $x^\prime +S$.

When the matrix $B$ has integer entries, determining the injectivity of the map $f_{\kappa}$ 
for a fixed (computable) parameter value $\kappa$, with respect to a semialgebraic subset $S$, 
is a question of quantifier elimination and thus can be decided algorithmically, but is very hard in practice. 
This paper focuses on how to decide injectivity for the whole  family, that is, for \emph{all} possible values of $\kappa \in \RR_+^r$,
for a matrix $B$ with real entries.
Our results are given in terms of sign vectors characterizing the orthants
that $\ker(A)$ and (a subset of) $\im(B)$ intersect nontrivially.
 
\begin{definition} \label{def:main_sig} 
For a vector $x \in \RR^n$, we obtain the \emph{sign vector} $\sigma(x)\in \{-,0,+\}^n$ by applying the sign function componentwise. 
\end{definition}
Note that a sign vector $\nu\in \{-,0,+\}^n$ corresponds to the (possibly lower-dimensional) orthant of $\RR^n$ given by $\sigma^{-1}(\nu)$.
For a subset $S \subseteq \RR^n$, we write $\sigma(S) = \{ \sigma(x) \st x \in S \}$ for the set of all sign vectors of $S$ and 
\begin{equation*} \label{eq:Sigma}
\Sigma(S) = \sigma^{-1}(\sigma(S))
\end{equation*} 
for the union of all (possibly lower-dimensional) orthants that $S$ intersects. 
For convenience, we introduce $S^* = S \setminus \{0\}$.

In order to state our main result, we require some more notation.
Identifying $B \in \RR^{r \times n}$ with the linear map $B \colon \RR^n \to \RR^r$,
we write $B(S)$ for the image under $B$ of the subset $S \subseteq \RR^n$.
In analogy to $A_\kappa$, we introduce $B_\lambda = B \diag(\lambda)$ for  
$\lambda \in \RR^n_+$.
Finally, we write $J_{f_\kappa}$ for the Jacobian matrix associated with the map $f_\kappa$.
Here is our main result, which brings together and extends various existing results (see Section~\ref{subsec:crnt}).

\begin{theorem} \label{thm:main}
Let $f_\kappa \colon \RR^n_+ \to \RR^m$ be the generalized polynomial map $f_\kappa(x) = A_\kappa \, x^B$,
where $A \in \RR^{m \times r}$, $B \in \RR^{r \times n}$, and $\kappa \in \RR^r_+$.
Further, let $S \subseteq \RR^n$.
The following statements are equivalent:
\begin{itemize}
\item[(inj)]
$f_\kappa$ is injective with respect to $S$, for all $\kappa \in \RR^r_+$.
\item[(jac)]
$\ker\left(J_{f_\kappa}(x) \right) \cap S^* = \emptyset$, for all $\kappa \in \RR^r_+$ and $x \in \RR^n_+$.
\item[(lin)]
$\ker(A_\kappa B_\lambda) \cap S^* = \emptyset $, for all $\kappa \in \RR^r_+$ and $\lambda\in \RR^n_+$.
\item[(sig)] $\sigma(\ker(A)) \cap \sigma(B(\Sigma(S^*))) = \emptyset$. 
\end{itemize}
\end{theorem}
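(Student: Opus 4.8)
The plan is to establish $\refjac\Leftrightarrow\reflin\Leftrightarrow\refsig$ first and then $\refsig\Leftrightarrow\refinj$, the latter being the substantive part. Throughout I would use two elementary facts about sign vectors: (a) $\sigma(\diag(d)\,z)=\sigma(z)$ for every $d\in\RR^n_+$ and $z\in\RR^n$; and (b) if $z,z'\in\RR^n$ satisfy $\sigma(z)=\sigma(z')$, then $z'=\diag(d)\,z$ for some $d\in\RR^n_+$ (componentwise, take $d_k=z'_k/z_k$ where both entries are nonzero and $d_k=1$ where both vanish). For $\refjac\Leftrightarrow\reflin$ I would differentiate $f_\kappa$ directly, obtaining $J_{f_\kappa}(x)=A\,\diag(\kappa\mal x^B)\,B\,\diag(x)^{-1}=A_{\kappa\mal x^B}\,B_{x^{-1}}$, where $x^{-1}$ is the vector of componentwise inverses of $x$. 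As $(\kappa,x)$ runs over $\RR^r_+\times\RR^n_+$, the pair $(\kappa\mal x^B,\,x^{-1})$ runs over all of $\RR^r_+\times\RR^n_+$ (given $(\kappa',\lambda)$, put $x=\lambda^{-1}$ and $\kappa=\kappa'\mal(x^B)^{-1}$), so the family of Jacobian matrices of $f_\kappa$ coincides with $\{A_{\kappa'}B_\lambda:\kappa'\in\RR^r_+,\ \lambda\in\RR^n_+\}$; hence $\refjac$ and $\reflin$ are literally the same condition.

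For $\reflin\Leftrightarrow\refsig$ I would unwind the quantifiers. Writing $w=\diag(\lambda)\,v$ turns $A_\kappa B_\lambda v$ into $A\,\diag(\kappa)\,Bw$, and $\sigma(w)=\sigma(v)$ by (a), so as $\lambda\in\RR^n_+$ and $v\in S^*$ vary, $w$ ranges exactly over $\Sigma(S^*)$ (``$\subseteq$'' is clear; ``$\supseteq$'' uses (b)). By (a) and (b) again, some $\kappa\in\RR^r_+$ satisfies $\diag(\kappa)\,Bw\in\ker(A)$ if and only if $\sigma(Bw)\in\sigma(\ker(A))$. Therefore the negation of $\reflin$ reads: there is $w\in\Sigma(S^*)$ with $\sigma(Bw)\in\sigma(\ker(A))$, i.e.\ $\sigma(\ker(A))\cap\sigma(B(\Sigma(S^*)))\neq\emptyset$ --- precisely the negation of $\refsig$.

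The heart of the proof is $\refsig\Leftrightarrow\refinj$, and the bridge I would use is the identity
\[
f_\kappa(x)-f_\kappa(y)\;=\;A\,\diag(c)\,B\,(p-q),\qquad c\in\RR^r_+,
\]
valid with $p=\log x$ and $q=\log y$ (componentwise logarithms). It follows from $(x^B)_j-(y^B)_j=\expp{(Bq)_j}\bigl(\expp{(B(p-q))_j}-1\bigr)$ together with the factorization $\expp{t}-1=t\,g(t)$, where $g(t):=(\expp{t}-1)/t$ for $t\neq 0$ and $g(0):=1$ is everywhere positive; explicitly $c_j=\kappa_j\,\expp{(Bq)_j}\,g\bigl((B(p-q))_j\bigr)$. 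Observe also that $\sigma(p-q)=\sigma(x-y)$ because $\log$ is increasing. To get $\refsig\Rightarrow\refinj$ I would take any $\kappa\in\RR^r_+$ and any $x\neq y$ in $\RR^n_+$ with $x-y\in S$, hence $x-y\in S^*$; then $p-q\in\Sigma(S^*)$ and $B(p-q)\in B(\Sigma(S^*))$, and if $f_\kappa(x)=f_\kappa(y)$ the identity forces $\diag(c)\,B(p-q)\in\ker(A)$, so by (a) $\sigma(B(p-q))\in\sigma(\ker(A))\cap\sigma(B(\Sigma(S^*)))$, contradicting $\refsig$; thus $f_\kappa(x)\neq f_\kappa(y)$.

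The converse I would prove by contraposition, and this is the step I expect to be the main obstacle: one must produce genuine points $x\neq y$ in $\RR^n_+$ whose difference lies in the \emph{arbitrary} set $S$, and the key maneuver is to prescribe the logarithmic difference $\log x-\log y$ rather than $x-y$ itself. Assume $\refsig$ fails and fix $\tau\in\sigma(\ker(A))\cap\sigma(B(\Sigma(S^*)))$. Pick $w\in\Sigma(S^*)$ with $\sigma(Bw)=\tau$ and, since $\sigma(w)\in\sigma(S^*)$, an element $v\in S^*$ with $\sigma(v)=\sigma(w)$. Because $\expp{w_k}-1$ has the same sign as $w_k$, hence as $v_k$, for each $k$, the componentwise equations $y_k(\expp{w_k}-1)=v_k$ are solved by some $y\in\RR^n_+$ (take $y_k=v_k/(\expp{w_k}-1)$ if $w_k\neq 0$ and $y_k=1$ otherwise); setting $x_k=y_k\,\expp{w_k}>0$ yields $x-y=v\in S^*$ and $\log x-\log y=w$. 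Finally, $\sigma(Bw)=\tau\in\sigma(\ker(A))$ gives, via (b), a vector $c\in\RR^r_+$ with $\diag(c)\,Bw\in\ker(A)$, and choosing $\kappa\in\RR^r_+$ with $\kappa_j=c_j\big/\bigl(\expp{(Bq)_j}\,g((Bw)_j)\bigr)$ makes the coefficient vector in the identity equal to this $c$, whence $f_\kappa(x)-f_\kappa(y)=A\,\diag(c)\,Bw=0$ although $x\neq y$ and $x-y\in S$. So $\refinj$ fails, completing the proof.
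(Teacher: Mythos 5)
Your proof is correct and follows essentially the same route as the paper: the Jacobian reparametrization for \refjac$\Leftrightarrow$\reflin, the substitution $w=\lambda\mal v$ for \reflin$\Leftrightarrow$\refsig, and for \refinj$\Leftrightarrow$\refsig the logarithmic change of variables together with the construction $y_k=v_k/(\expp{w_k}-1)$, $x_k=y_k\expp{w_k}$, which is exactly the paper's Lemma on $\Lambda(S)=\Sigma(S)$. The only cosmetic difference is that you make the positive diagonal factor relating $x^B-y^B$ to $B(\ln x-\ln y)$ explicit via $\expp{t}-1=t\,g(t)$, where the paper argues purely at the level of sign vectors.
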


Note that, for a fixed exponent matrix $B$, condition \refsig depends only on 
the sign vectors of $\ker(A)$ and $S$. In particular, $f_\kappa$ is injective with respect to $S$ for all $\kappa \in \RR^r_+$
if and only if it is injective with respect to $\Sigma(S) \subseteq \RR^n$,
which is the largest set having the same sign vectors as $S$.

To study unrestricted injectivity, we set $S = \RR^n$ in Theorem~\ref{thm:main},
in which case condition \refsig is equivalent to
\begin{equation*}
\ker(B) = \{0\} \quad \textrm{and} \quad \sigma(\ker(A)) \cap \sigma(\im(B)) = \{0\} ;
\end{equation*}
see Corollary~\ref{cor:unrestricted}.
Assuming $\ker(B)=\{0\}$,
condition \refsig depends only on the corresponding vector subspaces $\ker(A)$ and $\im(B)$;
see also~\cite[Theorem~3.6]{MR}.

Birch's theorem~\cite{Birch} in statistics corresponds to the unrestricted case
$S=\RR^n$ and $B=A^T$ with full rank $n$.
Note that $\im(B)= \im(A^T)=\ker(A)^\bot$,
and hence $\sigma(\ker(A)) \cap \sigma(\im(B)) = \{0\}$ is trivially fulfilled.
Therefore, statement \refinj holds, so Theorem~\ref{thm:main} guarantees that
for any choice of vectors $y \in \RR^n$ and $\kappa \in \RR_+^r$,
there is at most one solution $x \in \RR_+^n$ to the equations
\begin{equation*}
\sum_{j =1}^r a_{ij} \, \kappa_j \,  x_1^{a_{1j}} \cdots x_n^{a_{nj}} = y_i, \quad i = 1, \ldots, n . 
\end{equation*}
In fact, Birch's theorem also guarantees the existence of a solution, for all $y$ 
in the interior of the polyhedral cone generated by the columns of $A$.
A related result, due to Horn, Jackson, and Feinberg, 
asserts the existence and uniqueness of complex balancing equilibria~\cite{Feinberg1972,Horn1972,HornJackson1972}, 
which is discussed in the next subsection and Section~\ref{sec:app}. 
Our generalization of Birch's theorem based on~\cite{MR} is given in statement \refsurj of Theorem~\ref{thm:Descartes}.

\subsection{Motivation from chemical reaction networks} \label{subsec:crnt}

For chemical reaction networks with mass-action kinetics,
the concentration dynamics are governed by dynamical systems \eqref{eq:diffeq} with polynomial maps $f_\kappa(x)=A_\kappa \, x^B$,
as defined in \eqref{eq:map_f}. 
We introduce some terms that are standard in the chemical engineering literature.
The components of $\kappa\in \RR^r_+$  are called   {\em rate constants} and are often unknown in practice.
The vector subspace $S=\im(A)$ is called the \emph{stoichiometric subspace}.
One speaks of {\em multistationarity} if there exist a vector of rate constants $\kappa\in \RR^r_+$
and two distinct positive vectors $x,y\in \RR^n_+ $ with $x-y \in S$ such that $f_\kappa(x)=f_\kappa(y)=0$.
Clearly, if  $f_\kappa$ is injective with respect to $S$ for all values of $\kappa$, then multistationarity is ruled out.
Therefore, Theorem~\ref{thm:main} can be applied in this setting to preclude multistationarity.

Indeed, our work unifies and extends existing conditions for injectivity established in the context of chemical reaction networks. 
The first such result was given by Craciun and Feinberg for the special case of a fully open network,
that is, when each chemical species has an associated outflow reaction and hence $S=\RR^n$: 
injectivity of the corresponding family of polynomial maps
was characterized by the  nonsingularity of the  associated Jacobian matrices, which could be assessed by determinantal conditions~\cite{ME_I}.
An elementary proof of this foundational result appeared in the context of geometric modeling~\cite{CGS},
and extended Jacobian and determinantal criteria were subsequently 
achieved for arbitrary networks~\cite{gnacadja_linalg,FeliuWiuf_MAK,JoshiShiu}.
Also, for networks with uni- and bi-molecular reactions and fixed rate constants,
injectivity of the polynomial map has been characterized~\cite{PKC}.
Injectivity results have been obtained also for families of kinetics different from mass-action,
in particular,
for nonautocatalytic kinetics~\cite{BanajiDonnell,BanajiCraciun2010},
power-law kinetics and strictly monotonic kinetics~\cite{WiufFeliu_powerlaw,feliu-bioinfo},
weakly monotonic kinetics~\cite{ShinarFeinberg2012}, and other families~\cite{banaji_pantea}.
Further, several injectivity criteria have been translated to
conditions on the species-reaction graph or the interaction 
graph~\cite{ME_II,MinchevaCraciun2008,BanajiCraciun2010,HeltonDeterminant,ShinarFeinberg2013}. 

Sign conditions for the injectivity of monomial maps have been applied both to preclude and to assert 
multiple positive steady states for several special types of  steady states, such as \emph{detailed balancing} 
and \emph{complex balancing equilibria} of mass-action systems \cite{Feinberg1972,Horn1972,HornJackson1972}, 
\emph{toric steady states} of mass-action systems~\cite{TSS}, and complex balancing equilibria of generalized 
mass-action systems~\cite{MR}. Specifically, such special steady states are parametrized by a monomial map, and 
multistationarity occurs if and only if the sign vectors of two vector subspaces intersect nontrivially.
Moreover, for given rate constants, 
existence of one complex balancing equilibrium in a mass-action system 
implies existence and uniqueness of such steady states within each coset of the stoichiometric subspace,
and no other steady states are possible \cite{HornJackson1972}.

In this paper we unify and extend the criteria for injectivity and multistationarity described above. 
Related results appear in the deficiency-oriented theory,
as initiated by Horn, Jackson, and Feinberg~\cite{Feinberg1972,HornJackson1972,Horn1972}
(see also~\cite{feinbergnotes,Feinberg1987,Feinberg1988,Feinberg1995a,Feinberg1995b}).
This theory is named after the {\em deficiency} of a reaction network, a nonnegative
integer that can be computed from basic network properties.
Deficiency zero networks  with mass-action
kinetics admit positive steady states if and only if the network is strongly connected, and, in this case,
there is a unique positive steady state, which is a complex balancing equilibrium.
On the other hand, 
some networks with deficiency one admit multiple positive  steady states,
and the capacity for multistationarity is characterized by certain sign conditions~\cite{Feinberg1988,Feinberg1995b}. 
For other uses of sign conditions to determine multistationarity,
see~\cite{MAPK,fein-043,fein-050} and the related applications to particular biochemical networks~\cite{HFC}.

\subsection{Application to real algebraic geometry} \label{subsec:rag}

An interesting consequence in the realm of real algebraic geometry that
emerges from the study of injectivity of generalized polynomial maps in applications is Theorem~\ref{thm:Descartes} below.
Statement~\refbound in that result was first proved by Craciun, Garcia-Puente, and Sottile in their study of
control points for toric patches \cite[Corollary 8]{CGS} based on a previous injectivity result by Craciun and 
Feinberg~\cite{ME_I}.
The surjectivity result underlying statement~\refsurj is due to M\"uller and Regensburger~\cite[Theorem~3.8]{MR},
who use arguments of degree theory for differentiable maps. 
We recognize Theorem~\ref{thm:Descartes} as the first
partial multivariate generalization of the following well-known rule proposed by Ren\'e Descartes in 1637
in ``La G\'eometrie'', an appendix to his ``Discours de la M\'ethode'', see \cite[pp.\ 96--99]{struik}.
No multivariate generalization is known,
and only a lower bound together with a disproven conjecture was proposed by Itenberg and Roy in 1996~\cite{ir96}. 

\begin{descartes}\label{pro:Descartes_original}
Given a univariate
real polynomial $f(x) = c_0 + c_1 x + \cdots + c_r x^r$,
the number of positive real roots of $f$ (counted with multiplicity) is bounded above by the number of sign variations
in the ordered sequence of the coefficients $c_0,\dots, c_r$, more precisely, discard the zeros in this sequence and then count the number of times two consecutive entries have different signs. 
Additionally, the difference between these two numbers (the number of positive roots and the number of sign variations) is even.
\end{descartes}
For instance, given the polynomial $f(x) = c_0 + x - x^2 + x^{k}$ with degree $k>2$, the number of variations in the sequence ${\rm sign}(c_0),+,-,+$ equals $3$ if $c_0 < 0$ and $2$ if $c_0 \ge 0$. Hence, $f$~admits at most $3$ or $2$ positive real roots, respectively, and this is independent of its degree.

An important consequence of Descartes' rule of signs is that the number of real roots of a real univariate polynomial $f$ can be bounded in terms of the number of monomials
in $f$ (with nonzero coefficient), independently of the degree of $f$. In the multivariate case, Khovanskii~\cite[Corollary 7]{Fewnomials} proved the remarkable result that the number of nondegenerate solutions in $\RR^n$ of a system of $n$ real polynomial equations  can also be bounded  
solely in terms of the number $q$ of distinct monomials appearing in these equations. Explicitly, the number of nondegenerate positive roots is 
at most  $2^{(q-1)(q-2)/2} \, (n+1)^{q-1}$.
In contrast to Descartes' rule, this bound is far from sharp and
the only known refinements of this bound do not depend on the signs of the coefficients of $f$ \cite[Chapters 5--6]{SottileBook}.

Accordingly, we view Theorem~\ref{thm:Descartes} as the first partial multivariate generalization of Descartes' rule, as the conditions of the theorem for precluding more than one positive solution depend both on the coefficients and the monomials of $f$.

We require the following notation.  
We introduce $[r] = \{1,\ldots,r\}$ for any natural number~$r$. For $A \in \RR^{n \times r}$ and $B \in \RR^{r \times n}$ with $n \le r$, and 
some index set $J\subseteq [r]$ of cardinality $n$, we write
$A_{[n],J}$ for the submatrix of $A$ indexed by the columns in $J$ and $B_{J,[n]}$ for the submatrix of $B$ indexed by the rows in $J$.

For any choice of $y \in \RR^n$, we consider
the system of $n$ equations in $n$ unknowns 
\begin{equation}\label{eq:y}
\sum_{j =1}^r a_{ij} \, x_1^{b_{j1}} \cdots x_n^{b_{jn}} = y_i, \quad i = 1, \dots,n.
\end{equation}
We denote by $C^\circ(A)$ the interior of the polyhedral cone
generated by the column vectors $a^1, \dots, a^r$ of $A$:
\begin{equation*}\label{eq:cone}
C^\circ(A) = \left\{ \sum_{i=1}^r \mu_i \, a^i \in \RR^n \st \mu \in \RR^r_+ \right\}.
\end{equation*}

\begin{theorem} \label{thm:Descartes}[Multivariate Descartes' rule for (at most) one positive real root]
Let $A \in \RR^{n \times r}$ and $B \in \RR^{r \times n}$ be matrices with full rank $n$. Then,
\begin{itemize}
\item[\refbound] 
Assume that for all index sets $J \subseteq [r]$ of cardinality $n$,
the product $\det\big(A_{[n],J}\big) \det\big(B_{J,[n]}\big)$ either is zero or has the same sign as all other nonzero products,
and moreover, at least one such product is nonzero. Then,~\eqref{eq:y}
has at most one positive solution $x \in \RR_+^n$, for any $y \in \RR^n$.
\item[\refsurj]
Assume that the row vectors of $B$ lie in an open half-space and that
the determinants $\det\big(A_{[n],J}\big)$ and $\det\big(B_{J,[n]}\big)$ have the same sign
for all index sets $J \subseteq [r]$ of cardinality $n$, or the opposite sign in all cases.
Then, \eqref{eq:y}
has exactly one positive solution $x \in \RR_+^n$ if and only if $y \in C^\circ(A)$.
\end{itemize}
\end{theorem}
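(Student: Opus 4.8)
The plan is to obtain both statements from Theorem~\ref{thm:main} together with the Cauchy--Binet formula. Write $f = f_{\mathbf{1}}$ for the generalized polynomial map of Definition~\ref{def:main_gen} with rate vector $\kappa=(1,\dots,1)$, so that the system \eqref{eq:y} is exactly the equation $f(x)=y$ for $x\in\RR^n_+$. With $\Omega=\RR^n_+$ and $S=\RR^n$, Definition~\ref{def:main_inj} shows that ``\eqref{eq:y} has at most one positive solution for every $y$'' means precisely that $f$ is injective with respect to $\RR^n$; note also that $\ker(B)=\{0\}$ since $\rank(B)=n$.

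For~\refbound I would invoke the equivalence $\reflin\Leftrightarrow\refinj$ of Theorem~\ref{thm:main} with $S=\RR^n$, for which $S^*=\RR^n\setminus\{0\}$, so that~\reflin reads: the $n\times n$ matrix $A_\kappa B_\lambda$ is nonsingular for all $\kappa\in\RR^r_+$ and all $\lambda\in\RR^n_+$. By Cauchy--Binet,
\[
  \det\!\left(A_\kappa B_\lambda\right)=\left(\prod_{i=1}^{n}\lambda_i\right)\sum_{\substack{J\subseteq[r]\\|J|=n}}\left(\prod_{j\in J}\kappa_j\right)\det\!\left(A_{[n],J}\right)\det\!\left(B_{J,[n]}\right).
\]
Since $\prod_i\lambda_i>0$, condition~\reflin is equivalent to the polynomial $\kappa\mapsto\sum_{|J|=n}\big(\prod_{j\in J}\kappa_j\big)\det(A_{[n],J})\det(B_{J,[n]})$ having no zero on $\RR^r_+$. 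The monomials $\prod_{j\in J}\kappa_j$ indexed by distinct $J$ are pairwise distinct and strictly positive there, so the hypothesis of~\refbound---that all coefficients $\det(A_{[n],J})\det(B_{J,[n]})$ share one sign and are not all zero---makes this polynomial of constant nonzero sign on $\RR^r_+$. Hence~\reflin, and therefore~\refinj, hold; in particular $f=f_{\mathbf{1}}$ is injective with respect to $\RR^n$, which is the conclusion of~\refbound.

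For~\refsurj I would argue in three steps. First, the determinant hypothesis of~\refsurj implies that of~\refbound: if $\det(A_{[n],J})$ and $\det(B_{J,[n]})$ always agree in sign (respectively, always disagree), then each product $\det(A_{[n],J})\det(B_{J,[n]})$ is $\ge 0$ (respectively $\le 0$), and since $\rank(B)=n$ some $\det(B_{J_0,[n]})\neq 0$, which forces $\det(A_{[n],J_0})\neq 0$ and the corresponding product to be nonzero; so~\refbound applies and \eqref{eq:y} has at most one positive solution for every $y$. Second, if $x\in\RR^n_+$ solves \eqref{eq:y}, then $y=Ax^B=\sum_{j=1}^{r}(x^B)_j\,a^j$ with every $(x^B)_j>0$, so necessarily $y\in C^\circ(A)$. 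Third, for the converse---that a positive solution exists whenever $y\in C^\circ(A)$---I would cite the surjectivity theorem of M\"uller and Regensburger~\cite[Theorem~3.8]{MR}: the stated determinant conditions are exactly the requirement that the maximal minors of $A$ and those of $B$ have the same sign pattern up to a global sign, equivalently that $\im(A^T)$ and $\im(B)$ have the same sign vectors (the two cases in the statement being the two choices of representative), while ``the rows of $B$ lie in an open half-space'' is the properness-type condition used in that argument. Combining the three steps yields~\refsurj.

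I expect the only genuine obstacle to be this last step: the existence half of~\refsurj is not elementary and rests on the degree-theoretic argument of~\cite{MR}, which I would not reprove here. Everything else---the reduction to~\reflin, the Cauchy--Binet identity, the remark that a polynomial whose monomials have prescribed supports and whose nonzero coefficients share a sign cannot vanish on $\RR^r_+$, and the cone-membership computation---is routine.
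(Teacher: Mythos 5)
Your proposal is correct and follows essentially the same route as the paper: part \refbound is reduced to the injectivity criterion of Theorem~\ref{thm:main} with $S=\RR^n$ and settled by the Cauchy--Binet expansion of $\det(A_\kappa B_\lambda)$ together with the observation that a polynomial whose nonzero coefficients share a sign cannot vanish on the positive orthant (the paper packages this as Proposition~\ref{pro:polyequ} plus Corollary~\ref{cor:det}, which are proved by exactly that computation), while the existence half of \refsurj is, as in the paper, delegated to the surjectivity theorem of M\"uller--Regensburger after checking that the equal-chirotope and half-space hypotheses translate into $\sigma(\im(A^T))=\sigma(\im(B))$ and $(+,\dots,+)^T\in\sigma(\im(A^T))$. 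No gaps beyond the deliberate (and appropriate) citation of \cite[Theorem~3.8]{MR}.
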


Note that the sign conditions in statement~\refsurj together with the full rank of the matrices
imply the hypotheses of~\refbound.

To analyze a univariate polynomial $f(x)=c_0 +c_1 x + \cdots + c_r x^r$ 
in the setting of Theorem~\ref{thm:Descartes}, we have 
$A \in \RR^{1 \times r}$ with entries $c_1, \dots, c_r$,
$B\in \RR^{r \times 1}$ with entries $1, \dots, r$,
and $y = -c_0$.
In this univariate case, the hypotheses of \refbound in 
Theorem~\ref{thm:Descartes} reduce to the conditions
that $c_1, \dots, c_r$ are all
nonnegative (or nonpositive) and not all are zero. 
If these hold, Theorem~\ref{thm:Descartes} states that $f$ has at most one positive real root 
and, furthermore, if $c_0$ has the opposite sign from the nonzero $c_1, \dots, c_r$'s, \refsurj guarantees the existence of this root.
Indeed, there is at most one sign variation, depending on $\sign(c_0)$, and so the classical Descartes' rule yields the same conclusion.  
The result is also valid in the case of real, not necessarily natural, exponents.

In Proposition~\ref{pro:polyequ}, we consider the more general system of $m$ equations in $n$~unknowns with $r$ parameters:
$f_\kappa(x) = y$, where $f_\kappa$ is as in Definition~\ref{def:main_gen}.
More precisely, we give a criterion via sign vectors 
for precluding multiple positive real solutions 
$x \in \RR^n_+$
for all $y \in \RR^m$ and $\kappa \in \RR^r_+$.

We will give the proof of Theorem~\ref{thm:Descartes} in Section~\ref{subsec:solve}, where we restate the sign conditions on the minors
of $A$ and $B$ in terms of oriented matroids. Based on this approach, a generalization for
multivariate polynomials systems in $n$ variables with $n+2$ distinct
monomials is given in~\cite{BD14}. This case shows the intricacy inherent in the pursuit of 
a full generalization of Descartes' rule to the multivariate case.

\paragraph{Outline of the paper.}
In Section~\ref{sec:inj},
we characterize, in terms of sign vectors,
the injectivity of a family of generalized polynomial maps with respect to a subset.
In particular,
we prove Theorem~\ref{thm:main}, thereby isolating and generalizing key ideas in the literature.
Further, we relate our results to determinantal conditions, in case the subset is a vector subspace.  
In Section~\ref{sec:app},
we apply our results to chemical reaction networks with power-law kinetics,
thereby relating and extending previous results.
We give conditions for precluding multistationarity in general,  
for precluding multiple ``special'' steady states,  
and for guaranteeing the existence of two or more such steady states.
Further, we present applications to real algebraic geometry.
We prove the partial multivariate generalization of Descartes' rule, Theorem~\ref{thm:Descartes}, and we restate the hypotheses in the language of oriented matroids.
Finally, in Section~\ref{sec:algo}, we address algorithmic aspects of our results,
in particular, the efficient computation of sign conditions to decide injectivity.

\section{Sign conditions for injectivity} \label{sec:inj}

In this section,
we characterize, in terms of sign vectors, generalized polynomial maps $f_\kappa(x) = A_\kappa \, x^B$
that are injective with respect to a subset for all choices of the positive parameters $\kappa$.
We accomplish this through a series of results that lead to the proof of Theorem~\ref{thm:main}.

\subsection{Notation}\label{subsec:notation}

Here we summarize the notation used throughout this work.
Moreover, we elaborate on the concept of sign vectors defined in the introduction.

We denote the strictly positive real numbers by $\RR_+$
and the nonnegative real numbers by $\overline{\RR}_+$.
We define $\e^x \in \RR^n_+$ for $x \in \RR^n$ componentwise, that is, $(\e^x)_i = \e^{x_i}$;
analogously, $\ln(x) \in \RR^n$ for $x \in \RR^n_+$ and $x^{-1} \in \RR^n$ for $x \in \RR^n$ with $x_i \neq 0$.
For $x,y \in \RR^n$,
we denote the componentwise (or Hadamard) product by $x \mal y \in \RR^n$, that is, $(x \mal y)_i = x_i y_i$.
Further,
we define $x^b \in \RR$ for $x \in \RR^n_+$ and $b \in \RR^n$ as $x^b = \prod_{i=1}^n x_i^{b_i}$.

Given a matrix $B \in \RR^{r \times n}$, we denote by $b^1, \dots, b^n$ its column vectors
and by $b_1, \dots, b_r$ its row vectors.
Thus, the $j$th coordinate of the map $x^B \colon\RR^n_+ \to \RR^r_+$
is given by
\[
(x^B)_j  = x^{b_{j}} =x_1^{b_{j1}} \cdots x_n^{b_{jn}}.
\]
Recall that we define $B_\lambda$ for $B \in \RR^{r \times n}$ and $\lambda \in \RR_+^n$
as $B_\lambda = B \diag(\lambda)$.

We identify a matrix $B \in \RR^{r \times n}$ with the 
corresponding linear map $B \colon \RR^n \to \RR^r$
and write $\im(B)$ and $\ker(B)$ for the respective vector subspaces.
For a subset $S \subseteq \RR^n$,
we write $S^* = S \setminus \{0\}$
and denote the image of $S$ under $B$ by
\[
B(S) = \{ B \, x \st x \in S \}.
\]
For any natural number $n$, we define $[n]=\{1,\dots,n\}$. 
Given sets $I \subseteq [n]$ and $J\subseteq [r]$,
we denote the submatrix of $B$ with row indices in $J$ and column indices in $I$ by $B_{J,I}$.

Now we are ready to state some consequences of Definition \ref{def:main_sig}.
For $x,y \in \RR^n$, we have the equivalence
\[
\sigma(x) = \sigma(y)
\quad \Leftrightarrow \quad
x = \lambda \mal y \textrm{ for some } \lambda \in \RR^n_+ ,
\]
and hence, for $S \subseteq \RR^n$, we obtain
\begin{equation}\label{eq:SigmaS}
\Sigma(S) = \sigma^{-1}(\sigma(S)) = \{\lambda \mal x \st \lambda \in \RR^n_+ \text{ and } x \in S\}.
\end{equation}
For subsets $X,Y \subseteq \RR^n$, we have the equivalences
\begin{equation} \label{sigmaSigma}
\Sigma(X) \cap Y = \emptyset
\quad \Leftrightarrow \quad
\sigma(X) \cap \sigma(Y) = \emptyset
\quad \Leftrightarrow \quad
X \cap \Sigma(Y) = \emptyset .
\end{equation}

\subsection{Families of linear maps} \label{subsec:sign_linear}

In this subsection, we consider the case of linear maps.
We start with a useful lemma. 

\begin{lemma} \label{lem:lin} 
Let $B \in \RR^{r \times n}$ and $S \subseteq \RR^n$. 
The following statements are equivalent:
\begin{enumerate}[(i)]
\item $\ker(B_\lambda) \cap S = \emptyset$, for all $\lambda \in \RR^n_+$.
\item $\sigma(\ker(B)) \cap \sigma(S) = \emptyset$.
\end{enumerate}
\end{lemma}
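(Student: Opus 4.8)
The plan is to prove the two implications separately, exploiting the fact that the diagonal scaling $B \mapsto B_\lambda = B\diag(\lambda)$ only reparametrizes the positive orthant and does not change which orthants $\ker(B)$ meets. The crucial algebraic observation is the relation between $\ker(B_\lambda)$ and $\ker(B)$: since $B_\lambda x = B(\lambda \mal x)$, we have $x \in \ker(B_\lambda)$ if and only if $\lambda \mal x \in \ker(B)$, i.e.\ $\ker(B_\lambda) = \lambda^{-1} \mal \ker(B)$, where $\lambda^{-1}$ is the componentwise inverse (which lies in $\RR^n_+$). In particular $\sigma(\ker(B_\lambda)) = \sigma(\ker(B))$ for every $\lambda \in \RR^n_+$, because Hadamard multiplication by a positive vector preserves sign vectors.

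For the direction \refi $\Rightarrow$ \refii, I would argue by contraposition. Suppose $\sigma(\ker(B)) \cap \sigma(S) \neq \emptyset$, so there are $v \in \ker(B)$ and $s \in S$ with $\sigma(v) = \sigma(s)$. By the equivalence recalled just before \eqref{eq:SigmaS}, $s = \lambda \mal v$ for some $\lambda \in \RR^n_+$. Then $B_{\lambda^{-1}} s = B(\lambda^{-1} \mal s) = B v = 0$, so $s \in \ker(B_{\lambda^{-1}}) \cap S$, and this set is nonempty, contradicting \refi (applied with the positive vector $\lambda^{-1}$).

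For the direction \refii $\Rightarrow$ \refi, again by contraposition: if $\ker(B_\lambda) \cap S \neq \emptyset$ for some $\lambda \in \RR^n_+$, pick $x$ in this intersection. Then $x \in S$ and, as noted above, $\lambda \mal x \in \ker(B)$. Since $\sigma(\lambda \mal x) = \sigma(x)$, we get a common sign vector $\sigma(x) \in \sigma(\ker(B)) \cap \sigma(S)$, contradicting \refii.

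The argument is short and I do not anticipate a genuine obstacle; the only point requiring minor care is making the bookkeeping with $\lambda$ versus $\lambda^{-1}$ consistent and observing that $\lambda \in \RR^n_+$ forces $\lambda^{-1} \in \RR^n_+$, so quantifying over all positive $\lambda$ in \refi is symmetric under inversion. One could alternatively phrase the whole proof in one line using \eqref{sigmaSigma} together with the identity $\bigcup_{\lambda \in \RR^n_+} \ker(B_\lambda) = \Sigma(\ker(B))$, but splitting into the two contrapositive implications keeps the logic transparent.
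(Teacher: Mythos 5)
Your proposal is correct and rests on the same key observation as the paper's proof, namely $B_\lambda x = B(\lambda \mal x)$ together with the fact that $\sigma(x)=\sigma(y)$ exactly when $x = \lambda \mal y$ for some positive $\lambda$. The paper packages this as a single chain of equivalences, \refi $\Leftrightarrow$ $\ker(B)\cap\Sigma(S)=\emptyset$ $\Leftrightarrow$ \refii via \eqref{sigmaSigma} --- precisely the ``one line'' alternative you mention at the end --- so your two contrapositive implications are just an unfolded version of the same argument.
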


\begin{proof}
Statement \refi holds if and only if $B_\lambda \, x = B(\lambda \mal x) \not = 0$
for all $\lambda \in \RR^n_+$ and $x \in S$,
that is,
if and only if
$\ker(B) \cap \Sigma(S) = \emptyset$.
By~\eqref{sigmaSigma},
this is equivalent to statement \refii.
\end{proof}

We note that, if $0 \in S$, statements \refi and \refii do not hold, 
so we instead apply Lemma~\ref{lem:lin} to $S^*$.
In particular, if $S$ is a vector subspace of $\RR^n$,
then $\ker(B_{\lambda}) \cap S^* = \emptyset$ reduces to $\ker(B_{\lambda}) \cap S = \{0\}$,
that is, $B_{\lambda}$ is injective on $S$.

Now we are ready to prove the equivalence of statements \reflin and \refsig in Theorem~\ref{thm:main}.

\begin{proposition} \label{pro:lin}
Let $A \in \RR^{m \times r}$, $B \in \RR^{r \times n}$, and $S \subseteq \RR^n$.
The following statements are equivalent:
\begin{enumerate}[(i)]
\item
$\ker(A_\kappa B_\lambda) \cap S = \emptyset$, for all $\kappa \in \RR^r_+$ and $\lambda\in \RR^n_+$. 
\item
$\sigma(\ker(A)) \cap \sigma(B(\Sigma(S))) = \emptyset$.
\end{enumerate}
\end{proposition}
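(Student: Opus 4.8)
## Proof Plan

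The plan is to reduce Proposition~\ref{pro:lin} to Lemma~\ref{lem:lin} by a composition argument. The key observation is that $\ker(A_\kappa B_\lambda) \cap S = \emptyset$ for all $\kappa, \lambda$ is really a statement about two successive linear maps: first $B_\lambda$ applied to $S$, then $A_\kappa$ applied to the image. So I would first rewrite statement~(i) in terms of the intermediate set $B_\lambda(S) = B(\lambda \mal S)$.

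First I would show that statement~(i) is equivalent to: $\ker(A_\kappa) \cap B(\Sigma(S)) = \emptyset$ for all $\kappa \in \RR^r_+$. The forward direction: if $x \in S$ with $A_\kappa B_\lambda x = 0$, then $y = B_\lambda x = B(\lambda \mal x) \in B(\Sigma(S))$ lies in $\ker(A_\kappa)$; conversely any $y \in B(\Sigma(S)) \cap \ker(A_\kappa)$ is of the form $y = B(\lambda \mal x)$ with $\lambda \in \RR^n_+$, $x \in S$, by~\eqref{eq:SigmaS}, hence $A_\kappa B_\lambda x = 0$. One subtlety: I must be careful whether $0$ is allowed in these sets, but since the equivalence only involves membership of possibly-zero vectors and the empty-set conclusion, tracking $0$ carefully causes no problem here — if $0 \in S$ then both sides fail, consistently. (In the application within Theorem~\ref{thm:main} one works with $S^*$, but the proposition as stated is for general $S$.)

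Next I would apply Lemma~\ref{lem:lin} with the roles played by the matrix $A$ (in place of $B$) and the subset $T := B(\Sigma(S))$ (in place of $S$): the condition $\ker(A_\kappa) \cap T = \emptyset$ for all $\kappa \in \RR^r_+$ is equivalent to $\sigma(\ker(A)) \cap \sigma(T) = \emptyset$, which is exactly statement~(ii). This is a direct citation of Lemma~\ref{lem:lin}, no new work.

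The only step requiring genuine care — and the one I expect to be the main obstacle — is the very first reduction, specifically verifying that $B(\Sigma(S))$ is the correct intermediate set and that the quantifier over $\lambda$ interacts correctly with the image under $B$. The point is that $\Sigma(S) = \{\lambda \mal x \st \lambda \in \RR^n_+,\ x \in S\}$ by~\eqref{eq:SigmaS}, so $B(\Sigma(S)) = \{B_\lambda x \st \lambda \in \RR^n_+,\ x \in S\}$, which makes the equivalence with~(i) a matter of unwinding definitions. So I would present the proof in two short moves: (1) rewrite~(i) as $\ker(A_\kappa) \cap B(\Sigma(S)) = \emptyset$ for all $\kappa$, using~\eqref{eq:SigmaS}; (2) invoke Lemma~\ref{lem:lin} to conclude~(ii). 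This mirrors the structure of the proof of Lemma~\ref{lem:lin} itself and should be only a few lines.
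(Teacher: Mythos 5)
Your proposal is correct and follows essentially the same route as the paper: rewrite (i) as $\ker(A_\kappa)\cap B(\Sigma(S))=\emptyset$ for all $\kappa$ using the description of $\Sigma(S)$ in~\eqref{eq:SigmaS}, then apply Lemma~\ref{lem:lin} to the matrix $A$ and the subset $B(\Sigma(S))$. The details you supply for the first reduction and the remark about $0\in S$ are consistent with the paper's treatment.
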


\begin{proof}
Clearly, statement \refi is equivalent to
$\ker(A_\kappa) \cap  B_\lambda(S) = \emptyset$, for all $\kappa \in \RR^r_+$ and ${\lambda \in \RR^n_+}$.
Using~\eqref{eq:SigmaS}, this is equivalent to $\ker(A_\kappa) \cap B(\Sigma(S)) = \emptyset$, for all $\kappa \in \RR^r_+$.
By Lemma~\ref{lem:lin} applied to the matrix $A$ and the subset $B(\Sigma(S))$,
this is in turn equivalent to statement \refii.
\end{proof}

Again, if $S$ is a vector subspace, $\ker(A_\kappa B_\lambda) \cap S^* = \emptyset$
reduces to $\ker(A_\kappa B_\lambda) \cap S = \{0\}$,
that is, $A_\kappa B_\lambda$ is injective on $S$.
Clearly, the statements in Lemma~\ref{lem:lin} are necessary conditions for the statements in Proposition~\ref{pro:lin}.

\subsection{Families of generalized monomial/polynomial maps} \label{subsec:sign_monopoly}

In this subsection, we use the results on families of linear maps
to give sign conditions for the injectivity of families of generalized polynomial maps with respect to a subset.

From Definition~\ref{def:main_inj},
we conclude that a function $g$ defined on $\RR^n_+$ is injective with respect to a subset $S \subseteq \RR^n$
if and only if for every $x \in \RR^n_+$ one has $g(x) \neq g(y)$ for all $y \in (x+S^*) \cap \RR^n_+$,
where $x+S^* := \{x+y \st y \in S^*\}$.
In case $S$ is a vector subspace, then such a function 
$g$ is injective on the intersection $(x+S) \cap \RR^n_+$ of any coset $x+S$ with the domain $\RR^n_+$.

We start with a key observation.

\begin{lemma} \label{lem:lambda}
For $S \subseteq \RR^n$, let
\begin{equation} \label{eq:Lambda}
\Lambda(S) := \{ \ln x - \ln y \st x,y \in \RR^n_+ \textrm{ and } x - y \in S \} .
\end{equation} 
Then, $\Lambda(S) = \Sigma(S)$.
\end{lemma}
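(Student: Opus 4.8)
The plan is to prove the equality $\Lambda(S) = \Sigma(S)$ by double inclusion, exploiting the fact that $\ln$ is a bijection from $\RR_+^n$ to $\RR^n$ whose inverse is $\e^{(\cdot)}$, and that the componentwise product interacts with $\ln$ in the expected way.

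For the inclusion $\Lambda(S) \subseteq \Sigma(S)$, I would start with an arbitrary element $v = \ln x - \ln y \in \Lambda(S)$, where $x, y \in \RR_+^n$ and $s := x - y \in S$. The key manipulation is to write, coordinatewise, $v_i = \ln x_i - \ln y_i = \ln(x_i/y_i)$, so that $\e^{v_i} = x_i/y_i$, i.e. $x = \e^v \mal y$. Hence $s = x - y = (\e^v - \mathbf{1}) \mal y$ where $\mathbf{1}$ is the all-ones vector. Now the point is that for each coordinate $i$, the sign of $\e^{v_i} - 1$ equals the sign of $v_i$ (since $t \mapsto \e^t$ is increasing and $\e^0 = 1$), and $y_i > 0$; therefore $\sigma(s)_i = \sigma(v_i)$ for every $i$, that is, $\sigma(v) = \sigma(s) \in \sigma(S)$, so $v \in \Sigma(S)$ by definition of $\Sigma$.

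For the reverse inclusion $\Sigma(S) \subseteq \Lambda(S)$, I would take $v \in \Sigma(S)$, so that $\sigma(v) = \sigma(s)$ for some $s \in S$. I then need to produce $x, y \in \RR_+^n$ with $\ln x - \ln y = v$ and $x - y \in S$; the natural guess is to find a positive scaling realizing $s$ as $x - y$. Set $y_i := |s_i| / |\e^{v_i} - 1|$ when $v_i \neq 0$ (note $\e^{v_i} - 1 \neq 0$ exactly when $v_i \neq 0$, and $\sigma(v_i) = \sigma(s_i)$ guarantees $s_i \neq 0$ as well in that case), and $y_i := 1$ (any positive number) when $v_i = 0$, in which case $s_i = 0$ too. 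Then define $x := \e^v \mal y$, which is positive, and $\ln x - \ln y = v$ by construction. One checks coordinatewise that $x_i - y_i = (\e^{v_i} - 1) y_i$ has the same sign as $v_i$ hence as $s_i$, and in fact $x_i - y_i = s_i$ by the choice of $y_i$ when $v_i \neq 0$, and $x_i - y_i = 0 = s_i$ when $v_i = 0$. Thus $x - y = s \in S$, giving $v \in \Lambda(S)$.

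The main obstacle, such as it is, is bookkeeping around the zero coordinates: one must make sure the scalar $y_i$ is well-defined and strictly positive even when $v_i = 0$, and that the sign-matching hypothesis $\sigma(v) = \sigma(s)$ is used precisely to rule out the degenerate case $v_i = 0$ but $s_i \neq 0$ (and vice versa). Once the coordinatewise sign identity $\sigma\big((\e^v - \mathbf{1}) \mal y\big) = \sigma(v)$ for $y \in \RR_+^n$ is isolated as the crux, both inclusions follow cleanly; no deeper input is needed.
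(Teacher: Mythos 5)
Your proposal is correct and follows essentially the same route as the paper: the forward inclusion via the sign identity $\sigma(\ln x - \ln y)=\sigma(x-y)$ (monotonicity of $\ln$/$\exp$), and the reverse inclusion via the explicit construction $y_i = s_i/(\e^{v_i}-1)$, $x_i = \e^{v_i}y_i$ for nonzero coordinates and $x_i=y_i$ constant otherwise. The only cosmetic difference is that you start from the definition $\Sigma(S)=\sigma^{-1}(\sigma(S))$ rather than the equivalent parametrization $\Sigma(S)=\{\lambda\mal z \st \lambda\in\RR^n_+,\ z\in S\}$ used in the paper, which makes the two constructions coincide up to a sign bookkeeping.
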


\begin{proof}
Let $x,y \in \RR^n_+$ such that $x-y \in S$. Then, using the strict monotonicity of the logarithm 
we have $\sigma(\ln x - \ln y) = \sigma(x-y) \in \sigma(S)$ and  hence $\ln x - \ln y \in \Sigma(S)$.
This proves the inclusion $\Lambda(S) \subseteq \Sigma(S)$.
Conversely,
let $\lambda \in \RR^n_+$ and $z \in S$.
We construct $x,y \in \RR^n_+$ such that $\ln x - \ln y = \lambda \mal z$ 
and $x - y = z$ as follows:
if $z_i \neq 0$, then $e^{\lambda_i z_i} \neq 1$, so we may define 
$y_i := z_i / (\e^{\lambda_i z_i} - 1)$ and $x_i := y_i \e^{\lambda_i z_i}$;
otherwise, set $x_i=y_i=1$.
This proves $\Sigma(S) \subseteq \Lambda(S)$. 
\end{proof}

The construction of $x,y$ such that $x-y=z \in S$ in the proof of Lemma~\ref{lem:lambda} 
can be traced back at least to \cite[Section~7]{Feinberg1988}.
See also \cite[Lemma~1]{MAPK} and~\cite[Theorem~5.5]{TSS}.

\begin{lemma} \label{lem:SB}
For $B \in \RR^{r \times n}$ and $S\subseteq \RR^n$, let
\begin{equation} \label{eq:SB}
S_B := \{ x^B - y^B \st x,y \in \RR^n_+ \textrm{ and } x - y \in S^* \} .
\end{equation}
Then, $\sigma(S_B) = \sigma(B(\Sigma(S^*))).$
\end{lemma}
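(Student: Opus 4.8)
The plan is to pass to logarithmic coordinates, where the generalized monomial map $x \mapsto x^B$ becomes linear, and then invoke Lemma~\ref{lem:lambda}. The crucial first step is to establish a pointwise \emph{sign identity}: for all $x, y \in \RR^n_+$,
\[
\sigma\bigl(x^B - y^B\bigr) = \sigma\bigl(B(\ln x - \ln y)\bigr).
\]
To prove this I would set $u = \ln x$ and $v = \ln y$, so that for each $j \in [r]$ one has $(x^B)_j = x^{b_j} = \e^{(Bu)_j}$ and likewise $(y^B)_j = \e^{(Bv)_j}$. Since $t \mapsto \e^t$ is strictly increasing, the sign of $\e^{(Bu)_j} - \e^{(Bv)_j}$ equals the sign of $(Bu)_j - (Bv)_j = \bigl(B(u-v)\bigr)_j$, and reading this off componentwise gives the identity.

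Granting the sign identity, the rest is a change of variables. I would write
\[
\sigma(S_B) = \bigl\{\, \sigma\bigl(x^B - y^B\bigr) \st x,y \in \RR^n_+,\ x - y \in S^* \,\bigr\}
= \bigl\{\, \sigma\bigl(B(\ln x - \ln y)\bigr) \st x,y \in \RR^n_+,\ x - y \in S^* \,\bigr\}.
\]
By the definition~\eqref{eq:Lambda} applied to the set $S^*$, the collection of vectors $\ln x - \ln y$ appearing on the right is precisely $\Lambda(S^*)$, and Lemma~\ref{lem:lambda} (with $S^*$ in place of $S$) gives $\Lambda(S^*) = \Sigma(S^*)$. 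Hence $\sigma(S_B) = \{\, \sigma(B\zeta) \st \zeta \in \Sigma(S^*)\,\} = \sigma\bigl(B(\Sigma(S^*))\bigr)$, which is the claimed equality.

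I do not expect a substantial obstacle: the only point needing care is that the two displayed sets genuinely range over the same set of log-differences, and this is exactly what Lemma~\ref{lem:lambda} delivers. Its ``easy'' inclusion $\Lambda(S^*) \subseteq \Sigma(S^*)$ yields $\sigma(S_B) \subseteq \sigma\bigl(B(\Sigma(S^*))\bigr)$, while its constructive direction — producing, for each $\zeta = \lambda \mal z$ with $\lambda \in \RR^n_+$ and $z \in S^*$, a pair $x,y \in \RR^n_+$ with $\ln x - \ln y = \zeta$ and $x - y = z \in S^*$ — yields the reverse inclusion. Everything else reduces to the elementary sign identity above, so the work has effectively been front-loaded into Lemma~\ref{lem:lambda}.
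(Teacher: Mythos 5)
Your proposal is correct and follows essentially the same route as the paper: both rest on the pointwise identity $\sigma(x^B - y^B) = \sigma(B(\ln x - \ln y))$ (via strict monotonicity of the exponential/logarithm), then identify the set of log-differences with $\Lambda(S^*)$ and invoke Lemma~\ref{lem:lambda} to replace it by $\Sigma(S^*)$. No gaps.
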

\begin{proof}
For $x,y \in \RR^n_+$, we have $\sigma(x^B - y^B) = \sigma(B(\ln x - \ln y))$ by the strict monotonicity of the logarithm,
and hence
\begin{align*}
\sigma(S_B)
= \sigma\big(\{ B(\ln x - \ln y) \st x,y \in \RR^n_+ \textrm{ and } x - y \in S^* \}\big)
= \sigma(B(\Lambda(S^*))),
\end{align*}
using~\eqref{eq:Lambda}.
By Lemma~\ref{lem:lambda}, $\sigma(S_B) = \sigma(B(\Sigma(S^*)))$.
\end{proof}

\begin{proposition} \label{pro:mono}
Let $B \in \RR^{r \times n}$ and $S \subseteq \RR^n$.  
Further, let $\varphi_B \colon \RR^n_+ \to \RR^r_+$ be the generalized monomial map $\varphi_B(x) = x^B$.
The following statements are equivalent:
\begin{enumerate}[(i)]
\item $\varphi_B$ is injective with respect to $S$.
\item $\sigma(\ker(B)) \cap \sigma(S^*) = \emptyset$.
\end{enumerate}
\end{proposition}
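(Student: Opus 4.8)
The plan is to translate injectivity of $\varphi_B$ into a kernel condition via logarithms, and then invoke Lemma~\ref{lem:lambda} to convert it into the desired sign condition. By Definition~\ref{def:main_inj} (applied with $\Omega = \RR^n_+$), statement~(i) is the negation of the existence of a pair $x,y \in \RR^n_+$ with $x \neq y$, $x - y \in S$ and $x^B = y^B$; and since $x \neq y$ together with $x - y \in S$ is the same as $x - y \in S^*$, statement~(i) says precisely that there is no pair $x,y \in \RR^n_+$ with $x - y \in S^*$ and $x^B = y^B$. Because the logarithm is injective on $\RR^n_+$ and $\ln(x^B) = B \ln x$, the equality $x^B = y^B$ is equivalent to $B(\ln x - \ln y) = 0$. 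Hence (i) is equivalent to
\[
\ker(B) \cap \Lambda(S^*) = \emptyset ,
\]
where $\Lambda(S^*) = \{ \ln x - \ln y \st x,y \in \RR^n_+ \textrm{ and } x - y \in S^* \}$ as in~\eqref{eq:Lambda}.

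Next I would apply Lemma~\ref{lem:lambda} to the set $S^*$, which gives $\Lambda(S^*) = \Sigma(S^*)$, so that (i) becomes $\ker(B) \cap \Sigma(S^*) = \emptyset$. Finally, the equivalences~\eqref{sigmaSigma} with $X = \ker(B)$ and $Y = S^*$ turn this into $\sigma(\ker(B)) \cap \sigma(S^*) = \emptyset$, which is statement~(ii). Alternatively, one can avoid the explicit appeal to $\Lambda$ by observing that (i) says $0 \notin S_B$, hence $0 \notin \sigma(S_B)$; then Lemma~\ref{lem:SB} rewrites $\sigma(S_B) = \sigma(B(\Sigma(S^*)))$, and since the only vector with zero sign vector is $0$, the condition $0 \notin \sigma(B(\Sigma(S^*)))$ is the same as $\ker(B) \cap \Sigma(S^*) = \emptyset$, after which one concludes as before.

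There is no genuinely hard step here: the substance has already been packaged into Lemma~\ref{lem:lambda} (and Lemma~\ref{lem:SB}), whose proof contains the one nontrivial point, namely the explicit construction of $x,y \in \RR^n_+$ realizing a prescribed difference $z \in S^*$ together with a prescribed positive scaling of $\ln x - \ln y$. The only thing to watch is the role of $S^*$ versus $S$: the trivial pair $x = y$ must be excluded (equivalently, one requires $x - y \neq 0$), which is exactly why the resulting sign condition involves $\sigma(S^*)$ rather than $\sigma(S)$, and why injectivity with respect to $S$ is unaffected by whether $0 \in S$.
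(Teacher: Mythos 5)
Your proposal is correct and follows essentially the same route as the paper: the paper's proof is literally your ``alternative'' version (passing through $0 \notin S_B$ and Lemma~\ref{lem:SB}), while your primary argument merely inlines the logarithm step and appeals to Lemma~\ref{lem:lambda} directly, which is the same underlying mechanism. All steps, including the careful handling of $S^*$ versus $S$ and the use of~\eqref{sigmaSigma}, match the paper's reasoning.
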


\begin{proof}
By \eqref{eq:SB}, statement \refi is equivalent to $0 \notin S_B$.
By Lemma~\ref{lem:SB}, this is in turn equivalent to $0 \notin B(\Sigma(S^*))$,
that is, $\ker(B)\cap \Sigma(S^*)=\emptyset$.
By \eqref{sigmaSigma}, this is equivalent to statement \refii.
\end{proof}

Comparing Proposition~\ref{pro:mono} with Lemma~\ref{lem:lin},
we observe that $\varphi_B$ being injective with respect to $S$
is equivalent to $\ker(B_\lambda) \cap S^* = \emptyset$, for all $\lambda \in \RR^n_+$.
In case $S$ is a vector subspace,
then $\varphi_B$ is injective on the intersection $(x+S) \cap \RR^n_+$ of any coset of $S$ with the domain $\mathbb{R}^n_+$
if and only if $B_\lambda$ is injective on $S$ for all $\lambda \in \RR^n_+$.

Next we prove the equivalence of statements \refinj and \refsig in Theorem~\ref{thm:main}.

\begin{proposition} \label{pro:poly}
Let $f_\kappa \colon \RR^n_+ \to \RR^m$ be the generalized polynomial map $f_\kappa(x) = A_\kappa \, x^B$,
where $A \in \RR^{m \times r}$, $B \in \RR^{r \times n}$, and $\kappa \in \RR^r_+$.
Further, let $S \subseteq \RR^n$.
The following statements are equivalent:
\begin{itemize}
\item[(inj)]
$f_{\kappa}$ is injective with respect to $S$, for all $\kappa \in \RR^r_+$.
\item[(sig)]
$\sigma(\ker (A)) \cap \sigma(B(\Sigma(S^*))) =\emptyset$.
\end{itemize}
\end{proposition}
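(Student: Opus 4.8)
The plan is to reduce Proposition~\ref{pro:poly} to the already-established results on monomial maps (Proposition~\ref{pro:mono}) and linear maps (Proposition~\ref{pro:lin}), by peeling off the linear factor $A_\kappa$ from the monomial map $x \mapsto x^B$. The key structural observation is that $f_\kappa = L_\kappa \circ \varphi_B$, where $\varphi_B(x) = x^B$ is the generalized monomial map and $L_\kappa \colon \RR^r \to \RR^m$ is the linear map given by the matrix $A_\kappa = A\diag(\kappa)$. Injectivity of $f_\kappa$ with respect to $S$ should then be analyzed in two stages: first, how $\varphi_B$ moves the relevant difference set, and second, whether the linear map $A_\kappa$ can collapse the resulting set to zero.

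Concretely, I would argue as follows. By Definition~\ref{def:main_inj} and the discussion preceding Lemma~\ref{lem:SB}, $f_\kappa$ is injective with respect to $S$ if and only if $A_\kappa(x^B) \neq A_\kappa(y^B)$, i.e.\ $A_\kappa(x^B - y^B) \neq 0$, for all $x,y \in \RR^n_+$ with $x - y \in S^*$. Recalling the set $S_B = \{\, x^B - y^B \st x,y \in \RR^n_+ \text{ and } x - y \in S^* \,\}$ from \eqref{eq:SB}, statement~\refinj for a fixed $\kappa$ says precisely that $\ker(A_\kappa) \cap S_B = \emptyset$. Hence \refinj (for all $\kappa \in \RR^r_+$) is equivalent to $\ker(A_\kappa) \cap S_B = \emptyset$ for all $\kappa \in \RR^r_+$. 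Now I apply Lemma~\ref{lem:lin} with the matrix $A$ in place of $B$ and the subset $S_B$ in place of $S$: this reduces the condition to $\sigma(\ker(A)) \cap \sigma(S_B) = \emptyset$. Finally, Lemma~\ref{lem:SB} gives $\sigma(S_B) = \sigma(B(\Sigma(S^*)))$, so the condition becomes exactly $\sigma(\ker(A)) \cap \sigma(B(\Sigma(S^*))) = \emptyset$, which is \refsig.

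One point that needs a little care: Lemma~\ref{lem:lin} is stated for the condition ``$\ker(B_\lambda) \cap S = \emptyset$ for all $\lambda$'', with a varying diagonal scaling $\lambda$, whereas here the varying object is $\kappa$ entering through $A_\kappa = A\diag(\kappa)$. But this is the same thing with the roles of the matrix and its diagonal perturbation matched correctly: $A_\kappa = A\diag(\kappa)$ is exactly $A$ post-composed with the diagonal scaling indexed by $\kappa \in \RR^r_+$, so Lemma~\ref{lem:lin} applies verbatim with $(B,\lambda,S) \rightsquigarrow (A,\kappa,S_B)$. I should also note that $S_B$ never contains $0$ automatically — it might or might not — but Lemma~\ref{lem:lin} as stated handles an arbitrary subset, and in fact if $0 \in S_B$ then both sides fail, consistent with the equivalence. (The exclusion of $0$ has already been taken care of by working with $S^*$ inside the definition of $S_B$, mirroring the remark after Lemma~\ref{lem:lin}.)

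The main obstacle, such as it is, is not any single hard step but rather bookkeeping: making sure the set $S_B$ is the correct bridge object and that the two black-box lemmas are invoked with exactly the right substitutions. The genuinely substantive content — the construction of positive preimages $x,y$ with prescribed $\ln x - \ln y$ and the monotonicity-of-logarithm argument translating multiplicative sign data into additive sign data — has been isolated into Lemmas~\ref{lem:lambda} and~\ref{lem:SB}, so the proof of Proposition~\ref{pro:poly} itself is a short composition of those results with Lemma~\ref{lem:lin}. I expect the final write-up to be only a few lines long.
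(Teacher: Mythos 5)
Your proof is correct and is essentially identical to the paper's own argument: both reduce \refinj to the condition $\ker(A_\kappa)\cap S_B=\emptyset$ for all $\kappa$, then apply Lemma~\ref{lem:lin} to the matrix $A$ and the subset $S_B$, and finish with Lemma~\ref{lem:SB}. No gaps.
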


\begin{proof}
Statement \refinj  asserts that for $x,y \in \RR^{n}_+$ with $x-y \in S^*$, 
we have $A_\kappa \, (x^B-y^B) \neq 0$ for all $\kappa \in \RR^r_+$.
This is equivalent to asserting that 
$\ker(A_\kappa) \cap S_B = \emptyset$ for all $\kappa \in \RR^r_+$, with $S_B$ as in~\eqref{eq:SB}.
By applying Lemma~\ref{lem:lin} to the matrix $A$ and the subset $S_B$,
this is in turn equivalent to $\sigma(\ker(A)) \cap \sigma(S_B) = \emptyset$.
By Lemma~\ref{lem:SB}, $\sigma(S_B) = \sigma(B(\Sigma(S^*)))$, and the equivalence to statement \refsig is proven.
\end{proof}

A necessary condition for \refsig to hold is $\ker(B) \cap \Sigma(S^*) = \emptyset$
or, equivalently, $\sigma(\ker(B))\cap  \sigma(S^*) = \emptyset$.
By Proposition~\ref{pro:mono},
this corresponds to the fact that
for $f_\kappa$ to be injective with respect to $S$ for all $\kappa \in \RR^r_+$,
the monomial map $\varphi_B$ must be injective with respect to $S$.

To prove the equivalence of statements \reflin and \refjac in Theorem~\ref{thm:main},
we will use the following observation. 

\begin{lemma} \label{lem:jac}
Let $A=(a_{ij})\in\RR^{m \times r}$, $B=(b_{ij}) \in \RR^{r \times n}$,
 $\kappa \in \RR^r_+$, 
and
$\lambda \in \RR^n_+$.
Further,
let $f_\kappa \colon \RR^n_+ \to \RR^m$ be the generalized polynomial map $f_\kappa(x) = A_\kappa \, x^B$.
Then, the sets of all Jacobian matrices $J_{f_\kappa}(x)$ and all matrices $A_\kappa B_\lambda$ coincide:
\[
\left\{J_{f_\kappa}(x)  \st \kappa \in \RR^r_+ \textrm{ and } x \in \RR^n_+ \right\}
=
\left\{ A_\kappa B_\lambda  \st \kappa \in \RR^r_+ \textrm{ and } \lambda \in \RR^n_+\right\} .
\]
\end{lemma}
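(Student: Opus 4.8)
The plan is to compute the Jacobian $J_{f_\kappa}(x)$ explicitly via the chain rule and exhibit a bijective change of variables between the pair $(\kappa, x) \in \RR^r_+ \times \RR^n_+$ and the pair $(\kappa, \lambda) \in \RR^r_+ \times \RR^n_+$. First I would fix $\kappa \in \RR^r_+$ and $x \in \RR^n_+$ and differentiate $f_{\kappa,i}(x) = \sum_{j=1}^r a_{ij}\,\kappa_j\, x^{b_j}$ with respect to $x_k$. Since $\DD{x^{b_j}}{x_k} = b_{jk}\, x^{b_j}/x_k$ for $x \in \RR^n_+$, one gets
\begin{equation*}
\big(J_{f_\kappa}(x)\big)_{ik} = \sum_{j=1}^r a_{ij}\,\kappa_j\, b_{jk}\, \frac{x^{b_j}}{x_k}.
\end{equation*}
Reading this as a matrix product, $J_{f_\kappa}(x) = A \diag(\kappa) \diag(x^B)\, B \diag(x^{-1}) = A_\kappa\, B_{\mu}$ where $\mu \in \RR^n_+$ has components $\mu_k = x_k^{-1}$ — wait, more carefully: the factor $x^{b_j}$ depends on the row index $j$ of $B$, so it must be absorbed into a diagonal matrix acting on the $r$ side, giving $J_{f_\kappa}(x) = A\,\diag(\kappa)\,\diag(x^B)\,B\,\diag(x^{-1})$. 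Setting $\tilde\kappa := \kappa \mal x^B \in \RR^r_+$ and $\lambda := x^{-1} \in \RR^n_+$, this reads $J_{f_\kappa}(x) = A_{\tilde\kappa} B_\lambda$, which shows the left-hand set is contained in the right-hand set.

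For the reverse inclusion, I would start from an arbitrary $\kappa \in \RR^r_+$ and $\lambda \in \RR^n_+$ and solve the system $\tilde\kappa = \kappa' \mal x^B$, $\lambda = x^{-1}$ for $(\kappa', x) \in \RR^r_+ \times \RR^n_+$. Taking $x := \lambda^{-1} \in \RR^n_+$ determines $x$, and then $\kappa' := \kappa \mal (x^B)^{-1} \in \RR^r_+$ (well-defined since $x^B \in \RR^r_+$) recovers a valid parameter vector, yielding $A_\kappa B_\lambda = J_{f_{\kappa'}}(x)$. Thus the two sets coincide. In fact the map $(\kappa, x) \mapsto (\kappa \mal x^B,\, x^{-1})$ is a bijection of $\RR^r_+ \times \RR^n_+$ onto itself — this makes both inclusions simultaneous and is the cleanest way to phrase the argument.

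The only mildly delicate point is bookkeeping: one must be careful that the monomial factor $x^{b_j}$ attaches to the $B$-row index $j$ and therefore becomes a diagonal matrix of size $r$ multiplying $\diag(\kappa)$ (equivalently, it rescales $\kappa$), whereas the factor $x_k^{-1}$ coming from the derivative attaches to the $B$-column index $k$ and becomes a diagonal matrix of size $n$ multiplying on the right (equivalently, it rescales $B$ to $B_\lambda$). Once the factorization $J_{f_\kappa}(x) = A\,\diag(\kappa \mal x^B)\,B\,\diag(x^{-1})$ is written correctly, the equality of the two sets is immediate from the surjectivity of $x \mapsto x^{-1}$ and $x \mapsto x^B$ onto $\RR^n_+$ and $\RR^r_+$ (the latter onto its image, which together with the free choice of $\kappa$ suffices). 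I do not expect any genuine obstacle here; the lemma is essentially a change-of-variables observation, and the substance of Theorem~\ref{thm:main} lies in the equivalences already established in Propositions~\ref{pro:lin}, \ref{pro:mono}, and~\ref{pro:poly}.
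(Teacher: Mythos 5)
Your proposal is correct and follows essentially the same route as the paper: compute the partial derivatives, factor the Jacobian as $A\,\diag(\kappa \mal x^B)\,B\,\diag(x^{-1}) = A_{\kappa'}B_\lambda$ with $\kappa' = \kappa \mal x^B$ and $\lambda = x^{-1}$, and observe that ranging over $(\kappa,x)$ is the same as ranging over $(\kappa',\lambda)$. Your explicit inversion $(x,\kappa') \mapsto (\lambda^{-1},\, \kappa \mal (x^B)^{-1})$ just spells out the reparametrization that the paper dispatches with ``clearly.''
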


\begin{proof} 
As $f_{\kappa,i}(x) = \sum_{j=1}^r a_{ij} \, \kappa_j \, x^{b_{j}}$,
the $(i,\ell)$th entry of the Jacobian matrix of $f_{\kappa}$ amounts to
\[
J_{f_\kappa}(x) _{i,\ell} = \DD{f_{\kappa,i}(x)}{x_\ell} =
\sum_{j=1}^r a_{ij} \, \kappa_j \, x^{b_{j}} \, b_{j\ell} \, x_\ell^{-1} .
\]
That is, 
$$J_{f_\kappa}(x)= A \diag(\kappa \mal x^B) B \diag(x^{-1}) = A_{\kappa'} B_\lambda$$
with $\kappa' = \kappa \mal x^B$ and $\lambda = x^{-1}$.  
Clearly, quantifying over all $\kappa \in \RR_+^r$ and $x \in \RR^n_+$ is equivalent to quantifying 
over all $\kappa' \in \RR^r_+$ and $\lambda \in \RR_+^n$.
\end{proof}

We can now combine all the results in this section in the proof of our main theorem.

\begin{proof}[Proof of Theorem~\ref{thm:main}]
The equivalences \reflin $\Leftrightarrow$ \refsig and \refinj $\Leftrightarrow$ \refsig
are shown in Propositions~\ref{pro:lin} and \ref{pro:poly}, respectively.
The equivalence \refjac $\Leftrightarrow$ \reflin follows from Lemma~\ref{lem:jac}.
\end{proof}

In case $S$ is a vector subspace,
the injectivity of $f_\kappa$ (on cosets $x+S$)
can be directly related to the injectivity of the Jacobian of $f_\kappa$ (on $S$). 
This line of thought underpins the original injectivity results on chemical reaction networks due to Craciun and Feinberg~\cite{ME_I} and their extensions. 
In particular,
the case $S=\im(A)$ and $m=n$ arises in applications to chemical reaction networks,
which we address in Section~\ref{sec:app}.  

As discussed in the introduction, a direct corollary of Theorem~\ref{thm:main} characterizes unrestricted injectivity,
that is, the case $S=\RR^n$. See also~\cite[Theorem~3.6]{MR}.

\begin{corollary} \label{cor:unrestricted}
Let $f_\kappa \colon \RR^n_+ \to \RR^m$ be the generalized polynomial map $f_\kappa(x) = A_\kappa \, x^B$,
where $A \in \RR^{m \times r}$, $B \in \RR^{r \times n}$, and $\kappa \in \RR^r_+$.
The following statements are equivalent:
\begin{enumerate}[(i)]
\item
$f_{\kappa}$ is injective, for all $\kappa \in \RR^r_+$.
\item
$\ker(B) = \{0\}$ and $\sigma(\ker(A)) \cap \sigma(\im(B)) = \{0\}$.
\end{enumerate}
\end{corollary}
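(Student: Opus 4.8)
The plan is to derive Corollary~\ref{cor:unrestricted} directly from Theorem~\ref{thm:main} by specializing to $S = \RR^n$ and simplifying condition \refsig. First I would observe that unrestricted injectivity of $f_\kappa$ on $\RR^n_+$ is exactly injectivity with respect to $S = \RR^n$, since $x - y \in \RR^n$ is automatic; thus statement \refi here is statement \refinj of Theorem~\ref{thm:main} with $S = \RR^n$, and it suffices to show that \refsig with $S = \RR^n$ is equivalent to the conjunction in statement \refii.

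Next I would compute the ingredients of \refsig. Since $S = \RR^n$, we have $S^* = \RR^n \setminus \{0\}$, which is a union of orthants, so $\Sigma(S^*) = \RR^n \setminus \{0\}$ as well (it is already closed under the $\lambda \mal (\cdot)$ action by \eqref{eq:SigmaS}). Therefore $B(\Sigma(S^*)) = B(\RR^n \setminus \{0\}) = \im(B) \setminus \ker(B)'$ — more precisely, $B(\RR^n \setminus \{0\})$ equals $\im(B)$ if $\ker(B) \neq \{0\}$ and equals $\im(B) \setminus \{0\}$ if $\ker(B) = \{0\}$, since $0 \in B(\RR^n \setminus \{0\})$ precisely when some nonzero vector lies in $\ker(B)$. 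So condition \refsig, namely $\sigma(\ker(A)) \cap \sigma(B(\Sigma(S^*))) = \emptyset$, splits into two cases according to whether $\ker(B)$ is trivial.

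The case analysis is the crux. If $\ker(B) \neq \{0\}$, then $0 \in B(\Sigma(S^*))$, so $\sigma(B(\Sigma(S^*)))$ contains the zero sign vector, which is always in $\sigma(\ker(A))$; hence \refsig fails, consistent with statement \refii failing because its first clause $\ker(B) = \{0\}$ fails. If $\ker(B) = \{0\}$, then $B(\Sigma(S^*)) = \im(B) \setminus \{0\}$, so $\sigma(B(\Sigma(S^*))) = \sigma(\im(B) \setminus \{0\}) = \sigma(\im(B)) \setminus \{0\}$ (the zero vector is the only vector in $\im(B)$ with zero sign vector). Thus $\sigma(\ker(A)) \cap \sigma(B(\Sigma(S^*))) = \sigma(\ker(A)) \cap (\sigma(\im(B)) \setminus \{0\})$, and since $0 \in \sigma(\ker(A)) \cap \sigma(\im(B))$ always, this intersection is empty if and only if $\sigma(\ker(A)) \cap \sigma(\im(B)) = \{0\}$. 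Assembling the two cases gives: \refsig holds $\iff$ $\ker(B) = \{0\}$ and $\sigma(\ker(A)) \cap \sigma(\im(B)) = \{0\}$, which is statement \refii.

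I expect no serious obstacle here; the only point requiring a moment of care is the bookkeeping around whether $0$ belongs to $B(\Sigma(S^*))$ and correspondingly whether the zero sign vector is spuriously included, which is why the hypothesis $\ker(B) = \{0\}$ must be extracted as a separate clause rather than being absorbed into the sign-vector condition. Everything else is a direct unwinding of the definitions of $\Sigma$, $\sigma$, and $B(\cdot)$ together with the already-proven Theorem~\ref{thm:main}.
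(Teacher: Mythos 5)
Your proposal is correct and follows essentially the same route as the paper: specialize Theorem~\ref{thm:main} to $S=\RR^n$, note that $\Sigma(S^*)=S^*$, and split on whether $\ker(B)$ is trivial to convert the condition $\sigma(\ker(A))\cap\sigma(B(S^*))=\emptyset$ into statement \refii. Your extra bookkeeping about when $0\in B(\Sigma(S^*))$ is exactly the point the paper's proof handles with ``the above equality does not hold if $\ker(B)\neq\{0\}$.''
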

\begin{proof} Let $S = \RR^n$ and hence $\Sigma(S^*) = S^*$.
By Theorem~\ref{thm:main}, statement \refi is equivalent to
\[
\sigma(\ker (A)) \cap \sigma(B(S^*)) = \emptyset .
\]
Clearly, the above equality does not hold if $\ker(B) \neq \{0\}$. 
If $\ker(B) = \{0\}$, then $B(S^*) = B(S)^* = \im(B)^*$.
Hence, statement \refi is equivalent to $\ker(B) = \{0\}$ and $\sigma(\ker (A)) \cap \sigma(\im(B)^*) = \emptyset$,
which is in turn equivalent to statement \refii.
\end{proof}

The results presented so far concern the injectivity of maps defined on the positive orthant.
In fact, the domain of $f_\kappa(x) = A_\kappa \, x^B$ can be extended to include certain points on the boundary of $\RR^n_+$, and our next result concerns this setting. 
Given $B=(b_{ij}) \in \RR^{r \times n}$, let $\Omega_B \subseteq \overline{\RR}^n_+$ 
be the maximal subset on which the monomial map $\varphi_B(x) = x^B$ is well defined, that is,
\[
\Omega_B := \{ x \in \overline{\RR}^n _+ \st x_j \neq 0 \text{ if } b_{ij} < 0 \text{ for some } i \in [r] \},
\]
and let $\bar{f}_\kappa$ be the extension of $f_\kappa$ to $\Omega_B$.
As it was shown in the context of chemical reaction networks \cite{FeliuWiuf_MAK,WiufFeliu_powerlaw,ShinarFeinberg2012},
injectivity of $f_\kappa$ with respect to $S$ 
precludes the existence of distinct $x,y \in \Omega_B$
in the same coset of $S$ that have the same image under $\bar{f}_{\kappa}$,
i.e.\ with $x-y \in S$ and $\bar{f}_\kappa(x) = \bar{f}_\kappa(y)$.

The technical condition in Proposition~\ref{pro:boundary} below is satisfied
if at least one of the two vectors $x$ and $y$ is in the positive orthant,
or if both contain some zero coordinates, but no coordinate of $x^B$ and $y^B$ vanishes simultaneously.
In particular, if $f_\kappa$ is injective with respect to $S$ for all $\kappa\in \RR^r_+$,
then a coset of $S$ cannot contain a vector in the interior of the positive orthant and a vector on the boundary
that have the same image under $\bar{f}_\kappa$.

\begin{proposition} \label{pro:boundary}
Let $f_\kappa \colon \RR^n_+ \to \RR^m$ be a generalized polynomial map $f_\kappa(x) = A_\kappa \, x^B$,
where $A \in \RR^{m \times r}$, $B \in \RR^{r \times n}$, and $\kappa \in \RR^r_+$. 
Assume that $f_\kappa$ is injective with respect to $S \subseteq \RR^n$, for all $\kappa \in \RR^r_+$.
As above, let $\bar{f}_\kappa$ denote the extension of $f_\kappa$ to $\Omega_B$.
Consider $x,y \in \Omega_B$ with $x \neq y$ and $x-y\in S$, satisfying the following condition:
for any $j \in [r]$, $x^{b_{j}}=y^{b_{j}}=0$ implies that $x_i = y_i = 0$ for all $i \in [n]$ with $b_{ji} \neq 0$.
Then, $\bar{f}_\kappa(x) \neq \bar{f}_\kappa(y)$ for all $\kappa \in \RR^r_+$.
\end{proposition}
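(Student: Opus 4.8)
The plan is to argue by contradiction and reduce the statement to the strictly positive orthant, where Lemma~\ref{lem:lin} applies. Suppose $\bar f_\kappa(x) = \bar f_\kappa(y)$ for some $\kappa \in \RR^r_+$, and write $x^B, y^B \in \RR^r$ for the monomial vectors with $j$-th entries $x^{b_j}, y^{b_j}$, which are well defined since $x,y\in\Omega_B$. Then $A\big(\kappa \mal (x^B - y^B)\big) = 0$, and since $\kappa$ is positive this means $\sigma(x^B - y^B) \in \sigma(\ker A)$. I will contradict this by producing $x(t), y(t) \in \RR^n_+$ with $x(t) - y(t) = x - y \in S^*$ and $\sigma\big(x(t)^B - y(t)^B\big) = \sigma(x^B - y^B)$: for such points the injectivity hypothesis gives $A_\kappa\big(x(t)^B - y(t)^B\big) \ne 0$ for every $\kappa \in \RR^r_+$, whence $\sigma\big(x(t)^B - y(t)^B\big) \notin \sigma(\ker A)$ by Lemma~\ref{lem:lin} applied to the matrix $A$ and the one-point set $\{x(t)^B - y(t)^B\} \subseteq \RR^r$, contradicting the above.

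To build the perturbation, I would set $N_x = \{i \in [n] : x_i = 0\}$, $N_y = \{i : y_i = 0\}$, and $N = N_x \cup N_y$, pick $w \in \overline{\RR}^n_+$ with $w_i > 0$ for $i \in N$ and $w_i = 0$ otherwise, and put $x(t) = x + tw$, $y(t) = y + tw$ for $t > 0$. Because $x, y \in \Omega_B$, every coordinate in $N_x$ (resp.\ $N_y$) occurs with a nonnegative exponent in every row of $B$; a routine check then shows that $x(t), y(t) \in \RR^n_+$ for all $t > 0$ and that $x(t)^{b_j} \to x^{b_j}$, $y(t)^{b_j} \to y^{b_j}$ as $t \to 0^+$ for each $j$. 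Also $x(t) - y(t) = x - y$ is nonzero (as $x\ne y$) and lies in $S$, hence in $S^*$.

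The core step is verifying that the sign vector of the monomial difference is preserved for small $t$, which I would do componentwise. Fix $j \in [r]$; note first that if $b_{ji} \ne 0$ for some $i \in N$ then, since $x,y \in \Omega_B$ and $i$ is a zero coordinate of $x$ or of $y$, that exponent is positive and the corresponding monomial $x^{b_j}$ or $y^{b_j}$ vanishes, so the following three cases are exhaustive. (a) If $b_{ji} = 0$ for all $i \in N$, the perturbation leaves the $j$-th monomial unchanged, so $x(t)^{b_j} - y(t)^{b_j} = x^{b_j} - y^{b_j}$ identically. (b) If $x^{b_j} = y^{b_j} = 0$, the hypothesis of the proposition forces $b_{ji} = 0$ except for $i \in N_x \cap N_y$, where $x(t)_i = y(t)_i = tw_i$; hence $x(t)^{b_j} = y(t)^{b_j}$, so again the $j$-th component is $0 = x^{b_j} - y^{b_j}$. (c) Otherwise exactly one of $x^{b_j}, y^{b_j}$ vanishes, so $x^{b_j} - y^{b_j} \ne 0$, and since $x(t)^{b_j} - y(t)^{b_j} \to x^{b_j} - y^{b_j}$ the $j$-th component keeps that nonzero sign for all small enough $t$. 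As $[r]$ is finite, one value of $t > 0$ serves all $j$, giving $\sigma\big(x(t)^B - y(t)^B\big) = \sigma(x^B - y^B)$ and hence the desired contradiction. (Equivalently, once $x(t)^B - y(t)^B \in S_B$ is observed, with $S_B$ as in~\eqref{eq:SB}, one may finish using Lemma~\ref{lem:SB} and the equivalence \refinj $\Leftrightarrow$ \refsig of Proposition~\ref{pro:poly}.)

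The one genuinely delicate point is this sign-preservation step: the perturbation must neither create a nonzero $j$-th component on a monomial where $x^B$ and $y^B$ already agree, nor kill a sign where they differ. Monomials not vanishing at either $x$ or $y$ are untouched by the perturbation (case (a)); monomials where the two values differ survive by continuity (case (c)); and the remaining monomials — those vanishing both at $x$ and at $y$ — are exactly where the technical hypothesis is needed, since it guarantees that $x(t)$ and $y(t)$ still agree on the support of $b_j$ (case (b)). Everything else — positivity of $x(t), y(t)$, convergence of the perturbed monomials, and the passage to sign vectors — is routine given $x, y \in \Omega_B$ and Lemma~\ref{lem:lin}.
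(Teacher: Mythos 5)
Your proof is correct and takes essentially the same approach as the paper: perturb the zero coordinates of $x$ and $y$ by a common small positive amount, use the technical hypothesis to show the sign vector of $x^B-y^B$ is preserved (the paper's case split on $x^{b_j}=y^{b_j}$ versus $x^{b_j}\neq y^{b_j}$ matches your cases (a)--(c)), and then contradict injectivity on the positive orthant. The only cosmetic difference is that you phrase the final contradiction via $\sigma(\ker(A))$ and Lemma~\ref{lem:lin}, whereas the paper absorbs the positive scaling $\lambda$ with $x^B-y^B=\lambda\mal(x^B_\ve-y^B_\ve)$ directly into $\kappa$; these are equivalent.
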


\begin{proof} 
For $\ve \in \RR_+$, we define positive vectors $x_\ve, y_\ve \in \RR^n_+$ 
coordinate-wise as follows:
$(x_\ve)_i = x_i + \ve$ and $(y_\ve)_i = y_i + \ve$ whenever $x_i y_i=0$, 
and $(x_\ve)_i = x_i$ and $(y_\ve)_i = y_i$ otherwise. 
Clearly, $x_\ve - y_\ve = x - y \in S$.
We claim that we can choose $\ve$ small enough such that
\[
\sigma(x^B_\ve - y^B_\ve) = \sigma(x^B-y^B).
\]
If $x^{b_j}\neq y^{b_j}$, then clearly $\sign(x^{b_j}_\ve - y^{b_j}_\ve) = \sign(x^{b_j}-y^{b_j})$ for 
sufficiently small $\ve$
since the map $\ve \mapsto x_\ve^B - y_\ve^B$ is continuous. 
Thus it suffices to show that $x^{b_{j}} = y^{b_{j}}$ implies $x_\ve^{b_{j}} = y_\ve^{b_{j}}$.  
In fact, we only need to consider the case when
$x_\ell y_\ell=0$ for some $\ell\in [n]$ with $b_{j\ell} \neq 0$.
Then, our hypothesis implies that $x_i=y_i=0$ for all $i\in [n]$ with $b_{ji} \neq 0$.
By construction, $(x_\ve)_i = (y_\ve)_i = \ve$ for all such $i$
and thus $x_\ve^{b_{j}} = y_\ve^{b_{j}}$, as claimed.

Suppose $\bar{f}_\kappa(x) - \bar{f}_\kappa(y) = A_\kappa (x^B - y^B)= 0$ for some $\kappa \in \RR^r_+$.
Since $x^B-y^B = \lambda \mal (x^B_\ve - y^B_\ve)$ for some $\lambda \in \RR^r_+$,
we obtain
$0 = A_\kappa (x^B - y^B)
= A_{\kappa'} (x^B_\ve - y^B_\ve)
= f_{\kappa'} (x_\ve) - f_{\kappa'} (y_\ve)$, where $\kappa' = \kappa \mal \lambda$.
Clearly, this contradicts the hypothesis that $f_{\kappa'}$ is injective with respect to $S$.
\end{proof}

A related result concerning injectivity up to the boundary in the two-dimensional case appears in~\cite{SottileZhu}.

\subsection{Determinantal conditions} \label{subsec:det}

In this subsection, we characterize the injectivity of a family of maps on the positive orthant $f_\kappa \colon \RR^n_+ \to \RR^n$, $x \mapsto A_\kappa \, x^B$,
with respect to $S \subseteq \RR^n$, 
in the case where $S$ is a vector subspace with $\dim(S) = \rank(A)$.
In particular, we provide injectivity conditions
in terms of determinants and signs of maximal minors. 

Given a proper vector subspace $S\subseteq \RR^n$ of dimension $s$, it can be presented as the image of a full-rank matrix $C\in \RR^{n \times s}$,
or as the kernel of 
a full-rank matrix $Z\in \RR^{(n-s)\times n}$,  whose
rows are a basis of $S^\perp$. To recall the relation between the maximal minors of $C$ and $Z$, we need the following
notation.
For $n\in \NN$ and a subset $I=\{i_1, \dots, i_s\}\subseteq [n]$, let $I^c= \{j_1, \dots, j_{n-s}\}$ be the complement of $I$ in $[n]$.
For $i_1 < \dots < i_s$ and $j_1 < \dots < j_{n-s}$, let $\tau(I) \in \{ \pm 1 \}$ denote the sign of the permutation that sends $1,\dots, n$ to $j_1, \dots, j_{n-s}, i_1, \dots, i_s$, respectively.

\begin{lemma}\label{lem:Gale1} 
Let $s,n$ be natural numbers with $0<s< n$ and $C\in \RR^{n \times s}, Z \in \RR^{(n-s)\times n}$ full-rank matrices with $\im(C)=\ker(Z)$.
Then, there exists a nonzero real number $\delta$ such that
\begin{equation*}\label{eq:detM3}
\delta\det(C_{I,[s]})  =  (-1)^{\tau( I)} \det(Z_{[n-s],I^c}), 
\end{equation*}
for all subsets $I\subseteq[n]$ of cardinality $s$.
\end{lemma}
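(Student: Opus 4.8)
The plan is to exploit the classical duality between the maximal minors of a full-rank matrix and those of a matrix whose rows span the orthogonal complement — i.e., to recognize this as a statement about Plücker/Grassmann coordinates. Both $\det(C_{I,[s]})$, as $I$ ranges over $s$-subsets of $[n]$, and $(-1)^{\tau(I)}\det(Z_{[n-s],I^c})$, as $I$ ranges over the same index sets, are (up to scaling) the Plücker coordinates of the $s$-dimensional subspace $\im(C) = \ker(Z)$. Since an $s$-dimensional subspace of $\RR^n$ determines its Plücker vector uniquely up to a nonzero scalar, the two vectors of signed minors must be proportional, which is exactly the claimed identity with a single constant $\delta$ independent of $I$.

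Concretely, the key steps are as follows. First, I would reduce to a normalized case: after a change of basis in the column space of $C$ (which multiplies all $\det(C_{I,[s]})$ by a common nonzero scalar, namely the determinant of the change-of-basis matrix) and likewise in the row space of $Z$, I may assume $C$ and $Z$ are in a convenient form — for instance, choose some $I_0$ with $\det(C_{I_0,[s]}) \neq 0$ and normalize so that $C_{I_0,[s]}$ is the identity and $Z_{[n-s],I_0^c}$ is the identity, after reordering coordinates. Second, with this normalization the relation $ZC = 0$ forces the remaining blocks of $C$ and $Z$ to be negative transposes of each other. Third, I would verify the identity for this normalized pair by a direct cofactor/Laplace-expansion computation, checking that the sign bookkeeping contributed by $\tau(I)$ exactly matches the sign picked up when expanding the two determinants along the complementary index sets; this is the standard "Gale duality" sign computation. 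Finally, I would undo the normalization: the changes of basis rescale the left side by one nonzero constant and the right side by another, and a possible coordinate reordering contributes a global sign, so the identity holds in general with $\delta$ equal to the resulting nonzero ratio.

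The main obstacle is entirely the sign bookkeeping — getting $\tau(I)$ to come out exactly right under Laplace expansion, and making sure the coordinate-reordering step used in the normalization does not silently change the sign convention encoded in $\tau$. It is easy to be off by a sign that depends on $I$, which would be fatal since the whole point is that $\delta$ is a single constant. I would guard against this by first checking the identity on two or three explicit small cases (e.g. $n=3$, $s=1$), pinning down the convention, and only then writing the general computation. An alternative that sidesteps much of the bookkeeping is to invoke a known statement of Gale duality for Plücker coordinates from the oriented-matroid or combinatorics literature and simply cite it; given that the paper already works with oriented matroids in Section~\ref{subsec:solve}, this may be the cleaner route, with the normalization argument above included only as a sketch.
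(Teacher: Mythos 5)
The paper does not actually prove Lemma~\ref{lem:Gale1}: it declares the statement well known and cites the literature (the determinant of a complex in Gelfand--Kapranov--Zelevinsky, and Joswig--Theobald), so the second route you offer at the end of your proposal --- simply invoking the known Gale/Pl\"ucker duality --- is exactly what the authors do. Your primary route (normalize so that $C_{I_0,[s]}$ and $Z_{[n-s],I_0^c}$ are identity blocks, use $ZC=0$ to pin down the remaining blocks, verify the identity by Laplace expansion, then undo the normalization) is the standard proof that lives inside those references, and it is sound: left-multiplying $Z$ by an invertible $Q$ scales every $\det(Z_{[n-s],I^c})$ by $\det(Q)$, right-multiplying $C$ by an invertible $P$ scales every $\det(C_{I,[s]})$ by $\det(P)$, and a permutation of the ambient coordinates changes $\tau$ coherently, so $\delta$ remains a single constant. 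Two caveats. First, a small slip: with $I_0=\{n-s+1,\dots,n\}$, $C=\bigl(\begin{smallmatrix} D\\ I_s\end{smallmatrix}\bigr)$ and $Z=(I_{n-s}\mid E)$, the relation $ZC=0$ gives $E=-D$, i.e.\ the blocks are \emph{negatives}, not negative transposes, of each other; your planned small-case checks would catch this immediately. Second, the only genuinely nontrivial content of the lemma is the sign verification that $(-1)^{\tau(I)}$ is exactly the factor produced by expanding the minor of $(I_{n-s}\mid -D)$ on the columns $I^c$ against the minor of $\bigl(\begin{smallmatrix} D\\ I_s\end{smallmatrix}\bigr)$ on the rows $I$; your proposal correctly identifies this as the crux but defers it. Since the paper itself defers it to the literature, this is not a gap relative to the paper, but a self-contained writeup would need to carry out that one computation.
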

Lemma~\ref{lem:Gale1} is well known 
(see for instance, \cite[p.~94, Equation~(1.6)]{gkzbook} and \cite[Appendix A]{gkzbook} on the determinant of a complex, in particular, the proofs of Lemma~5 and Proposition~11 or Theorem 12.16 in~\cite{JoswigTheobald2013}). 
The full-rank matrices $Z, C$ are called \emph{Gale dual}; see Definition~\ref{def:Gale} below.

Let $s\leq n$. For $A'\in\RR^{s \times r}$, $B \in \RR^{r \times n}$, and $Z\in \RR^{(n-s)\times n}$, let $\Gamma_{\kappa,\lambda}\in \RR^{n\times n}$ be the square matrix
given in block form as
\begin{align} \label{eq:mkappalambda}
\Gamma_{\kappa,\lambda} =
\begin{pmatrix}
Z \\ 
A'_\kappa B_\lambda
\end{pmatrix}, \qquad \textrm{for } \kappa\in \RR^r_+ ~{\rm and }
~ \lambda\in \RR^n_+.
\end{align}
For simplicity, we do not treat the case $s=n$ separately.
Instead, we use $\Gamma_{\kappa,\lambda} = A'_\kappa B_\lambda$ and $\det(Z_{[n-s],I^c})=1$ in the statements below for this case.

We start with two useful lemmas.
\begin{lemma}\label{lem:prepdet2}
Let $\Gamma_{\kappa,\lambda}$ be the matrix defined in \eqref{eq:mkappalambda},
for $s\leq n$,  $A'\in\RR^{s \times r}$, $B \in \RR^{r \times n}$, $Z\in \RR^{(n-s)\times n}$, $\kappa\in \RR^r_+$,
and $\lambda\in \RR^n_+$. Then
\[
\det(\Gamma_{\kappa,\lambda}) = \sum\nolimits_{I,J}   (-1)^{\tau(I)} \det(Z_{[n-s],I^c}) \det(A'_{[s],J}) \det(B_{J,I}) \kappa^J \lambda^I, 
\]
where we sum over all subsets $I\subseteq [n]$, $J \subseteq [r]$  of cardinality $s$, and $\kappa^J = \prod_{j \in J} \kappa_j$, $\lambda^I = \prod_{i \in I} \lambda_i$. 
\end{lemma}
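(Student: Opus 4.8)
The plan is to expand the determinant of the block matrix $\Gamma_{\kappa,\lambda}$ directly via the Cauchy–Binet formula, exploiting the block structure $\Gamma_{\kappa,\lambda} = \binom{Z}{A'_\kappa B_\lambda}$. First I would note that $\Gamma_{\kappa,\lambda}$ is an $n\times n$ matrix whose first $n-s$ rows are the rows of $Z$ and whose last $s$ rows are the rows of the product $A'_\kappa B_\lambda = A'\diag(\kappa)\,B\,\diag(\lambda)$. Expanding along the last $s$ rows by the generalized Laplace expansion (Laplace expansion along a block of rows), one obtains
\[
\det(\Gamma_{\kappa,\lambda}) = \sum_{I} (-1)^{\tau(I)}\,\det(Z_{[n-s],I^c})\,\det\big((A'_\kappa B_\lambda)_{[s],I}\big),
\]
where the sum is over all $I\subseteq[n]$ with $|I|=s$, $I^c$ is the complement, and $(-1)^{\tau(I)}$ is precisely the permutation sign defined just before Lemma~\ref{lem:Gale1} that accounts for interleaving the row-blocks and column-blocks. (For $s=n$ this collapses to the stated convention $\det(Z_{[n-s],I^c})=1$ and $\Gamma_{\kappa,\lambda}=A'_\kappa B_\lambda$.)

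Next I would compute each factor $\det\big((A'_\kappa B_\lambda)_{[s],I}\big)$. Writing $A'_\kappa B_\lambda = A'\diag(\kappa)\,B\,\diag(\lambda)$, the submatrix formed by all $s$ rows and the columns in $I$ is $A'\diag(\kappa)\,B_{[r],I}\,\diag(\lambda_I)$, where $\lambda_I$ denotes the restriction of $\lambda$ to the coordinates in $I$. Pulling out the diagonal matrix $\diag(\lambda_I)$ on the right contributes the factor $\prod_{i\in I}\lambda_i = \lambda^I$. For the remaining $s\times s$ matrix $A'\diag(\kappa)\,B_{[r],I}$, I would apply the Cauchy–Binet formula with the intermediate index $j$ ranging over $[r]$: since $A'\diag(\kappa)$ has size $s\times r$ and $B_{[r],I}$ has size $r\times s$,
\[
\det\big(A'\diag(\kappa)\,B_{[r],I}\big) = \sum_{J\subseteq[r],\,|J|=s} \det\big((A'\diag(\kappa))_{[s],J}\big)\,\det\big(B_{J,I}\big).
\]
Finally, $\det\big((A'\diag(\kappa))_{[s],J}\big) = \det(A'_{[s],J})\prod_{j\in J}\kappa_j = \det(A'_{[s],J})\,\kappa^J$, again by factoring out the diagonal matrix $\diag(\kappa)$ restricted to the columns in $J$. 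Substituting everything back and collecting the factors gives exactly
\[
\det(\Gamma_{\kappa,\lambda}) = \sum_{I,J} (-1)^{\tau(I)}\,\det(Z_{[n-s],I^c})\,\det(A'_{[s],J})\,\det(B_{J,I})\,\kappa^J\,\lambda^I,
\]
the sum being over all $I\subseteq[n]$ and $J\subseteq[r]$ of cardinality $s$, as claimed.

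The only genuinely delicate point is the bookkeeping of the sign $(-1)^{\tau(I)}$ in the block Laplace expansion: one must check that the sign produced by expanding $\det(\Gamma_{\kappa,\lambda})$ along the bottom $s$ rows, which picks columns $I$ for the bottom block and columns $I^c$ for the top block $Z$, matches the sign of the permutation sending $1,\dots,n$ to $j_1,\dots,j_{n-s},i_1,\dots,i_s$ used in the definition of $\tau(I)$. This is a standard but fiddly verification; it is exactly the same sign convention already invoked in Lemma~\ref{lem:Gale1}, so I would cross-reference that definition and, if needed, spell out the one-line permutation-sign computation. Everything else is a routine application of Cauchy–Binet and the multiplicativity of determinants under right/left multiplication by diagonal matrices, so I expect no further obstacles.
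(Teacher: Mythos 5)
Your proposal is correct and follows essentially the same route as the paper: Laplace expansion along the bottom $s$ rows (producing the sign $(-1)^{\tau(I)}$ and the complementary minor of $Z$), followed by Cauchy--Binet applied to $(A'_\kappa B_\lambda)_{[s],I}$, with the diagonal factors $\diag(\kappa)$ and $\diag(\lambda)$ contributing $\kappa^J$ and $\lambda^I$. The extra detail you give about factoring out the diagonal matrices is just an expanded version of the paper's one-line computation $\det((A'_\kappa)_{[s],J})\det((B_\lambda)_{J,I}) = \det(A'_{[s],J})\det(B_{J,I})\,\kappa^J\lambda^I$.
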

\begin{proof}
By Laplace expansion on the bottom $s$ rows of $\Gamma_{\kappa,\lambda}$,  we have that
\begin{equation*}\label{eq:detM}
\det(\Gamma_{\kappa,\lambda})  =  \sum\nolimits_{I}  (-1)^{\tau(I)} \det(Z_{[n-s],I^c})\det((A'_\kappa  B_\lambda)_{[s],I}),
\end{equation*}
where we sum over all subsets $I \subseteq [n]$ of cardinality $s$. 
The Cauchy-Binet formula yields
\begin{equation*}
\det((A'_\kappa  B_\lambda)_{[s],I}) = \sum\nolimits_J \det((A'_\kappa)_{[s],J}) \det((B_\lambda)_{J,I})
= \sum\nolimits_J \det(A'_{[s],J}) \det(B_{J,I}) \kappa^J \lambda^I, 
\end{equation*}
where we sum over all subsets $J \subseteq [r]$ of cardinality $s$.
\end{proof}

\begin{lemma}\label{lem:prepdet1}
Let $q(c)\in \RR[c_1,\dots,c_\ell]$ be a nonzero homogeneous polynomial, with degree at most one in each variable. There exists $c^*\in \RR_+^\ell$ such that $q(c^*) =0$ if and only if $q(c)$ has both positive and negative coefficients.
\end{lemma}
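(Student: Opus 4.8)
The plan is to prove both directions of the equivalence, with the "if" direction ($q$ has coefficients of both signs $\Rightarrow$ a positive zero exists) being the substantive one. For the "only if" direction, suppose all nonzero coefficients of $q$ have the same sign, say positive. Then for any $c^* \in \RR_+^\ell$, every monomial appearing in $q$ with a nonzero coefficient evaluates to a strictly positive number (a product of positive reals), so $q(c^*)$ is a sum of strictly positive terms, hence $q(c^*) > 0$; in particular $q$ has no positive zero. (Here we use that $q$ is not the zero polynomial, so at least one monomial survives.) The same argument with signs reversed handles the all-nonpositive case.

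For the "if" direction, assume $q$ has at least one positive coefficient and at least one negative coefficient. The goal is to produce $c^* \in \RR_+^\ell$ with $q(c^*) = 0$. The idea is to use continuity and the intermediate value theorem after reducing to a single parameter. Pick a monomial $m_+$ with positive coefficient and a monomial $m_-$ with negative coefficient. Since $q$ is homogeneous of some degree $d$ and has degree at most one in each variable, these monomials are squarefree of the same total degree $d$. First I would find a point $c^{(1)} \in \RR_+^\ell$ where $q(c^{(1)}) > 0$ and a point $c^{(2)} \in \RR_+^\ell$ where $q(c^{(2)}) < 0$: by choosing coordinates very large on the support of $m_+$ and very small (but positive) on the support of $m_-$ and on all other variables, the term coming from $m_+$ dominates, giving $q > 0$; symmetrically one gets $q < 0$. (One must be slightly careful because the two supports may overlap; but since $m_+ \neq m_-$, there is a variable in one support but not the other, and scaling that variable alone lets one term dominate the other while the homogeneity lets us rescale to keep everything finite and positive — alternatively, just invoke that a nonzero real polynomial restricted to the open set $\RR_+^\ell$ takes both a positive and a negative value, which follows because the zero set of $q$ cannot contain the open orthant unless $q \equiv 0$, and $\RR_+^\ell$ is connected, so if $q$ did not change sign it would be everywhere $\geq 0$ or $\leq 0$ on $\RR_+^\ell$, forcing — by examining leading behavior along rays — all coefficients to have one sign, a contradiction). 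Then connect $c^{(1)}$ and $c^{(2)}$ by the line segment (or any path) inside the convex set $\RR_+^\ell$; the function $t \mapsto q\big((1-t)c^{(1)} + t c^{(2)}\big)$ is continuous on $[0,1]$, positive at $t=0$ and negative at $t=1$, so by the intermediate value theorem it vanishes at some $t^* \in (0,1)$, and $c^* = (1-t^*)c^{(1)} + t^* c^{(2)} \in \RR_+^\ell$ is the desired zero.

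The main obstacle is the clean justification that $q$ actually attains both signs on $\RR_+^\ell$ when it has mixed-sign coefficients — i.e., ruling out the possibility that cancellations conspire to keep $q \geq 0$ (or $\leq 0$) throughout the open orthant despite having a negative (resp.\ positive) coefficient. The slick way around this is to note that, because $q$ has degree at most one in each variable, the coefficient of a squarefree monomial $\prod_{i \in T} c_i$ can be recovered as a value of $q$: setting $c_i \to \infty$ for $i \in T$ and $c_i \to 0^+$ for $i \notin T$ along the curve $c_i = s$ ($i \in T$), $c_i = s^{-1}$ or a fixed small constant ($i \notin T$), and using homogeneity to normalize, the sign of $q$ along this curve for large $s$ is the sign of that coefficient. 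Hence a positive coefficient forces $q > 0$ somewhere in $\RR_+^\ell$ and a negative one forces $q < 0$ somewhere; this is exactly what feeds the intermediate value argument above. I would present this extremal/leading-term computation carefully but briefly, then conclude with the one-line IVT step.
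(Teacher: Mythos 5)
Your proof is correct and follows essentially the same route as the paper: the decisive step in both is the substitution that sets the variables in the support of a chosen squarefree monomial to a large parameter and all other variables to a fixed positive constant, so that homogeneity plus multilinearity make that monomial's coefficient the leading coefficient of the resulting univariate polynomial, whence its sign is attained somewhere on $\RR_+^\ell$, and continuity on the connected orthant then yields a zero. Your earlier, hedged attempt (inflating the support of $m_+$ while shrinking that of $m_-$) is indeed problematic when the supports overlap, but the final ``coefficient recovered as a leading term'' argument is exactly the paper's and stands on its own, so you can simply discard the first attempt.
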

\begin{proof}
If all coefficients of $q(c)$ have the same sign, it is clear  that $q(c^*)\neq 0$, for all $c^*\in \RR_+^\ell$.
To prove the reverse implication, 
let $\alpha \, c^v$ be any monomial of $q$ (so, $v\in \{0,1\}^\ell$). For $\epsilon\in \RR_+$, define 
$c(\epsilon) \in  \RR_+^\ell$ by 
$c_i(\epsilon):=\epsilon$ if $v_i=1$ and $c_i(\epsilon):=1$ if $v_i=0$. 
Then $q(c(\epsilon))$ is a univariate polynomial in $\epsilon$ of the same degree as $q$ 
and with leading coefficient $\alpha$.
For sufficiently large $\epsilon$, 
the sign of $q(c(\epsilon))$ is the sign of $\alpha$.
Therefore, if two nonzero coefficients have opposite signs, $q(c)$ takes both  positive and negative values, and
so by continuity, there exists $c^*\in \RR_+^\ell$  such that $q(c^*)=0$.
\end{proof}

The following result generalizes \cite[Proposition~5.2--5.3]{WiufFeliu_powerlaw}.

\begin{theorem} \label{thm:jacdet}
Let $f_\kappa \colon \RR^n_+ \to \RR^m$ be the generalized polynomial map $f_\kappa(x) = A_\kappa \, x^B$,
where $A \in \RR^{m \times r}$, $B \in \RR^{r \times n}$, and $\kappa \in \RR^r_+$.

Assume that $\rank(A)=s$, and consider a vector subspace $S \subseteq \RR^n$ with $\dim(S) = s$.
Let $Z\in \RR^{(n-s)\times n}$ and 
 $C\in \RR^{n \times s}$ be matrices presenting $S$, that is, such that $\im(C)=S=\ker(Z)$.
Given $A'\in \RR^{s \times r}$ with $\ker(A)=\ker(A')$,  call
$\widetilde{A} = C A' \in \RR^{n \times r}$, and let
$\Gamma_{\kappa,\lambda}\in \RR^{n \times n}$ be the square matrix
associated to $A',B,Z$,   $\kappa \in \RR^r_+$, and $\lambda \in \RR^n_+$ as in \eqref{eq:mkappalambda}.

The following statements are equivalent:
\begin{itemize}
\item[(inj)]
$f_\kappa$ is injective with respect to $S$, for all $\kappa \in \RR^r_+$.
\item[(det)] 
Viewed as a polynomial in $\kappa$ and $\lambda$, $\det(\Gamma_{\kappa,\lambda})$ is nonzero and all of its nonzero coefficients have the same sign.
\item[(min)]
For all subsets $I\subseteq [n]$, $J\subseteq [r]$ of cardinality $s$,
the product $\det(\widetilde{A}_{I,J}) \det(B_{J,I})$ either is zero or has the same sign as all other nonzero products,
and moreover, at least one such product is nonzero.
\end{itemize}
\end{theorem}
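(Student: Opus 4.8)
The plan is to prove the chain of equivalences $\refinj \Leftrightarrow \refdet \Leftrightarrow \refmin$ by combining Theorem~\ref{thm:main} (the equivalence $\refinj \Leftrightarrow \reflin$) with the two preparatory lemmas. The key observation linking the abstract sign condition to the determinant is that, under the hypothesis $\dim(S)=\rank(A)=s$, injectivity with respect to $S$ is equivalent to the nonsingularity of the square matrix $\Gamma_{\kappa,\lambda}$ for all $\kappa,\lambda$. First I would establish this: by Theorem~\ref{thm:main}, $\refinj$ is equivalent to $\ker(A_\kappa B_\lambda)\cap S = \{0\}$ for all $\kappa\in\RR^r_+$, $\lambda\in\RR^n_+$; since $\ker(A)=\ker(A')$ we have $\ker(A_\kappa B_\lambda)=\ker(A'_\kappa B_\lambda)$, so $\refinj$ says $A'_\kappa B_\lambda$ is injective on $S=\ker(Z)$. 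A vector $v\in\RR^n$ lies in $\ker(\Gamma_{\kappa,\lambda})$ precisely when $Zv=0$ (i.e.\ $v\in S$) and $A'_\kappa B_\lambda v = 0$; hence $\Gamma_{\kappa,\lambda}$ is nonsingular for all $\kappa,\lambda$ if and only if $\refinj$ holds. Thus $\refinj$ is equivalent to $\det(\Gamma_{\kappa,\lambda})\neq 0$ for all $\kappa\in\RR^r_+$, $\lambda\in\RR^n_+$.

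Next I would invoke Lemma~\ref{lem:prepdet1}. By Lemma~\ref{lem:prepdet2}, $\det(\Gamma_{\kappa,\lambda})$ is a polynomial in $(\kappa,\lambda)$ that is homogeneous of degree $s$ in $\kappa$ and of degree $s$ in $\lambda$, and moreover of degree at most one in each individual variable $\kappa_j$ and each $\lambda_i$ (since each $\kappa^J$ and each $\lambda^I$ is squarefree). Concatenating the variables into a single list $c=(\kappa,\lambda)\in\RR^{r+n}_+$, the polynomial $q(c):=\det(\Gamma_{\kappa,\lambda})$ is homogeneous and has degree at most one in each variable, so Lemma~\ref{lem:prepdet1} applies: there exists $c^*\in\RR^{r+n}_+$ with $q(c^*)=0$ if and only if $q$ is zero or has coefficients of both signs. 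Negating, $\det(\Gamma_{\kappa,\lambda})\neq 0$ for all positive $\kappa,\lambda$ if and only if $q$ is nonzero and all its nonzero coefficients share the same sign — this is exactly $\refdet$. So $\refinj \Leftrightarrow \refdet$ is complete.

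For $\refdet \Leftrightarrow \refmin$, I would read off the coefficients from Lemma~\ref{lem:prepdet2}: the coefficient of the monomial $\kappa^J\lambda^I$ (for $|I|=|J|=s$) is $(-1)^{\tau(I)}\det(Z_{[n-s],I^c})\det(A'_{[s],J})\det(B_{J,I})$. Now I would use Lemma~\ref{lem:Gale1} applied to the Gale-dual pair $C,Z$ presenting $S$: there is a fixed nonzero $\delta$ with $\delta\det(C_{I,[s]}) = (-1)^{\tau(I)}\det(Z_{[n-s],I^c})$ for every $I$. Substituting, the coefficient of $\kappa^J\lambda^I$ equals $\delta^{-1}\det(C_{I,[s]})\det(A'_{[s],J})\det(B_{J,I})$. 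By the Cauchy–Binet formula applied to $\widetilde A = C A'$, we have $\det(\widetilde A_{I,J}) = \det(C_{I,[s]})\det(A'_{[s],J})$ (the sum over intermediate index sets of size $s$ collapses to a single term since $C$ has exactly $s$ columns and $A'$ exactly $s$ rows). Hence the coefficient of $\kappa^J\lambda^I$ is $\delta^{-1}\det(\widetilde A_{I,J})\det(B_{J,I})$, a fixed nonzero scalar $\delta^{-1}$ times the product appearing in $\refmin$. Since multiplying all coefficients by a common nonzero constant $\delta^{-1}$ does not change whether $q$ is nonzero nor whether its nonzero coefficients agree in sign, $\refdet$ holds if and only if all products $\det(\widetilde A_{I,J})\det(B_{J,I})$ are zero or of a common sign with at least one nonzero — which is $\refmin$.

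The only real subtlety — and the step I would be most careful about — is the first one: checking that the hypothesis $\dim(S)=\rank(A)=s$ is genuinely used to force the equivalence between injectivity of $A'_\kappa B_\lambda$ on $S$ and nonsingularity of the $n\times n$ matrix $\Gamma_{\kappa,\lambda}$. If $\dim(S)$ exceeded $\rank(A')=s$, the map $A'_\kappa B_\lambda$ restricted to $S$ could not be injective and $\Gamma_{\kappa,\lambda}$ would be identically singular, while if $\dim(S)<s$ the block $A'_\kappa B_\lambda$ would have too few rows relative to what is needed; the matched dimensions are exactly what make $\Gamma_{\kappa,\lambda}$ square of the right size with kernel $= S\cap\ker(A'_\kappa B_\lambda)$. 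The remaining care is bookkeeping: verifying the degree-at-most-one claim needed for Lemma~\ref{lem:prepdet1} (immediate from squarefreeness of $\kappa^J$ and $\lambda^I$), the homogeneity (immediate), and the single-term Cauchy–Binet identity for $\det(\widetilde A_{I,J})$ (because $C$ and $A'$ have inner dimension exactly $s$). I would also note explicitly, as the theorem's surrounding discussion does, that the convention $s=n$ is handled by setting $\Gamma_{\kappa,\lambda}=A'_\kappa B_\lambda$, $Z$ empty, and $\det(Z_{[n-s],I^c})=1$, so that all formulas above remain valid verbatim.
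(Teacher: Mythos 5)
Your proposal is correct and follows essentially the same route as the paper: reduce (inj) to nonsingularity of $\Gamma_{\kappa,\lambda}$ via the equivalence \refinj $\Leftrightarrow$ \reflin of Theorem~\ref{thm:main} and $S=\ker(Z)$, then apply Lemmas~\ref{lem:prepdet2} and~\ref{lem:prepdet1} for \refdet, and Lemma~\ref{lem:Gale1} together with Cauchy--Binet for \refmin. Your added care about the zero-polynomial case in Lemma~\ref{lem:prepdet1} and the role of $\dim(S)=\rank(A)=s$ is sound but does not change the argument.
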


\begin{proof}
Using the equivalence \refinj $\Leftrightarrow$ \reflin of Theorem~\ref{thm:main} and that $S$ is the solution set to the equation $Zx=0$,
statement \refinj  is equivalent to $\Gamma_{\kappa,\lambda}(x)\neq 0$ for all $\kappa\in \RR^r_+$, $\lambda \in \RR^n_+$, and $x\in \RR^n$ with $x\neq 0$.
As $\Gamma_{\kappa,\lambda}$ is a square matrix, this is in turn equivalent to $\det(\Gamma_{\kappa,\lambda})\neq 0$,
for all $\kappa \in \RR^r_+$ and $\lambda \in \RR^n_+$. 
By Lemma~\ref{lem:prepdet2}, $\det(\Gamma_{\kappa,\lambda})$ is a  homogeneous polynomial in $\kappa,\lambda$
with degree at most one in each variable.
Hence the equivalence \refinj $\Leftrightarrow$ \refdet follows from Lemma~\ref{lem:prepdet1}.

By Cauchy-Binet, $\det(\widetilde{A}_{I,J}) =\det(C_{I,[s]})\det(A'_{[s],J})$, and hence the equivalence \refdet $\Leftrightarrow$ \refmin follows from Lemmas~\ref{lem:Gale1}  and \ref{lem:prepdet2}.
\end{proof}

Therefore,  injectivity of $A_\kappa x^B$ 
can be assessed by computing either 
the nonzero products of the $s\times s$ minors of $\widetilde{A}$ and $B$
or the determinant of the symbolic matrix $\Gamma_{\kappa,\lambda}$.
Further, it follows from Theorem~\ref{thm:jacdet} that $\det(\Gamma_{\kappa,\lambda})$ equals the sum of the principal minors of size $s$ of $\widetilde{A}_\kappa B_\lambda$.
This implies the interesting fact that if $\det(\Gamma_{\kappa,\lambda})$ is nonzero, it equals the product of the nonzero eigenvalues of $\widetilde{A}_\kappa B_\lambda$.

Clearly,  the hypotheses of Theorem~\ref{thm:jacdet} are fulfilled for $S=\im(A)$. 
In this case,  the matrix $C\in \RR^{n \times s}$ can be chosen to satisfy
$A=CA'$. Therefore, we obtain the following corollary, which was proven in \cite{WiufFeliu_powerlaw}.

\begin{corollary} \label{cor:jacdet}
Let $f_\kappa \colon \RR^n_+ \to \RR^n$ be the generalized polynomial map $f_\kappa(x) = A_\kappa \, x^B$,
where $A \in \RR^{n \times r}$, $B \in \RR^{r \times n}$, and $\kappa \in \RR^r_+$.
Further, let $s=\rank(A)$. The following statements are equivalent:
\begin{enumerate}
\item[(inj)]  $f_\kappa$ is injective with respect to $\im(A)$, for all $\kappa \in \RR^r_+$.
\item[(min)] For all subsets $I\subseteq [n]$, $J\subseteq [r]$ of cardinality $s$,
the product $\det(A_{I,J}) \det(B_{J,I})$ either is zero or has the same sign as all other nonzero such products, 
and moreover, at least one such product is nonzero.
\end{enumerate} 
\end{corollary}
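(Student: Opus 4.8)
The plan is to obtain Corollary~\ref{cor:jacdet} simply as the instance $S = \im(A)$ (with $m = n$) of Theorem~\ref{thm:jacdet}. First I would check that the hypotheses of that theorem are met in this case: $S = \im(A)$ is a vector subspace of $\RR^n$, and $\dim(S) = \rank(A) = s$ holds automatically, so the requirement $\dim(S) = \rank(A)$ is satisfied. Thus Theorem~\ref{thm:jacdet} is applicable, and it remains only to identify the data $C$, $A'$, $\widetilde{A}$ of that theorem in a way that makes condition \refmin there coincide with condition \refmin here.

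Next I would choose a presentation of $S$ adapted to $A$. Let $C \in \RR^{n \times s}$ be a matrix whose columns form a basis of $\im(A)$, so that $\im(C) = S$ and $C$, regarded as a linear map, is injective. Since every column of $A$ lies in $\im(C)$, there is a unique $A' \in \RR^{s \times r}$ with $A = C A'$; injectivity of $C$ then yields $\ker(A') = \ker(C A') = \ker(A)$, so $A'$ is an admissible choice in Theorem~\ref{thm:jacdet}, and with it $\widetilde{A} = C A' = A$. In the boundary case $s = n$ one simply uses the convention fixed in Theorem~\ref{thm:jacdet} that $Z$ is absent and $\Gamma_{\kappa,\lambda} = A'_\kappa B_\lambda$, so no separate argument is required.

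Finally I would invoke the equivalence \refinj $\Leftrightarrow$ \refmin of Theorem~\ref{thm:jacdet}: $f_\kappa$ is injective with respect to $S = \im(A)$ for all $\kappa \in \RR^r_+$ if and only if, for all $I \subseteq [n]$ and $J \subseteq [r]$ of cardinality $s$, the product $\det(\widetilde{A}_{I,J}) \det(B_{J,I})$ is zero or has the same sign as all other nonzero such products, with at least one such product nonzero. Substituting $\widetilde{A} = A$ gives precisely condition \refmin of the corollary, which completes the proof. The only point that needs attention — more a bookkeeping matter than a genuine obstacle — is the existence of the factorization $A = C A'$ with $\im(C) = \im(A)$ together with the resulting identities $\ker(A') = \ker(A)$ and $\widetilde{A} = A$; this is elementary linear algebra, already anticipated in the paragraph preceding the corollary, so beyond specializing Theorem~\ref{thm:jacdet} there is essentially nothing to prove.
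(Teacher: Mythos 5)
Your proposal is correct and matches the paper's own derivation: the paper likewise obtains the corollary by specializing Theorem~\ref{thm:jacdet} to $S=\im(A)$, choosing $C$ with $A=CA'$ so that $\widetilde{A}=A$. Your filling-in of the factorization details ($\ker(A')=\ker(A)$, uniqueness of $A'$ from injectivity of $C$) is exactly the elementary linear algebra the paper leaves implicit.
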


Let $A \in \RR^{m \times r}$ and $B \in \RR^{r \times n}$ have full rank $m$ and $n$, respectively.
By Corollary~\ref{cor:unrestricted},
the (unrestricted) injectivity of $f_\kappa(x) = A_\kappa \, x^B$ is equivalent to $\sigma(\ker(A)) \cap \sigma(\im(B)) = \{0\}$.
For $m<n$, the intersection $\ker(A) \cap \im(B)$ is always nontrivial since $(r-m)+n>r$.
For $m=n$, determinantal conditions are given in Corollary~\ref{cor:det} below;
see also \cite[Theorem~3.1]{Chaiken1996} and \cite[Corollary~8]{CGS}.
For $m>n$, the problem is NP-complete; see Section~\ref{sec:algo}.

\begin{corollary} \label{cor:det}
Let $A \in \RR^{n \times r}$ and $B \in \RR^{r \times n}$ be matrices of rank $n$.
The following statements are equivalent:
\begin{enumerate}[(i)]
\item $\sigma(\ker(A)) \cap \sigma(\im(B)) = \{0\}$.
\item For all subsets $J\subseteq [r]$ of cardinality $n$, 
the product  $\det(A_{ [n] ,J})\det(B_{J, [n] })$
either is zero or has the same sign as all other nonzero products,
and moreover, at least one such product is nonzero.
\end{enumerate}
\end{corollary}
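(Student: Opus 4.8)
The plan is to recognize this as the special case of Corollary~\ref{cor:jacdet} in which the matrix $A$ already has full rank $n$, so that $s = \rank(A) = n$ and the "with respect to $\im(A)$" qualifier becomes vacuous: since $\dim(\im(A)) = n$, we have $\im(A) = \RR^n$, and injectivity with respect to $\RR^n$ is plain injectivity. First I would invoke Corollary~\ref{cor:unrestricted}: the (unrestricted) injectivity of $f_\kappa(x) = A_\kappa\, x^B$ for all $\kappa \in \RR^r_+$ is equivalent to $\ker(B) = \{0\}$ together with $\sigma(\ker(A)) \cap \sigma(\im(B)) = \{0\}$. But here $B$ has full rank $n$, so $\ker(B) = \{0\}$ holds automatically, and thus statement~\refi of the corollary ($\sigma(\ker(A)) \cap \sigma(\im(B)) = \{0\}$) is equivalent to the unrestricted injectivity of $f_\kappa$ for all $\kappa$.

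Next I would close the loop by applying Corollary~\ref{cor:jacdet} with $S = \im(A) = \RR^n$ and $s = n$: its condition~\refmin says precisely that for all index sets $I, J \subseteq [r]$ — wait, here $I \subseteq [n]$ — of cardinality $n$, the product $\det(A_{I,J})\det(B_{J,I})$ is either zero or of the same sign as all other nonzero products, and at least one is nonzero. Since $\lvert I\rvert = n$ forces $I = [n]$, the only index sets $I$ that occur are $I = [n]$, so $A_{I,J} = A_{[n],J}$ and $B_{J,I} = B_{J,[n]}$, and condition~\refmin collapses exactly to statement~\refii of the present corollary. Chaining the two equivalences — \refi $\Leftrightarrow$ (injectivity of $f_\kappa$ for all $\kappa$) $\Leftrightarrow$ \refii — completes the argument.

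Alternatively, and perhaps cleaner for a self-contained write-up, one can bypass the "injectivity" intermediary entirely and argue purely linearly: by Proposition~\ref{pro:lin} with $S = \RR^n$ (so $\Sigma(S^*) = S^* = \RR^n\setminus\{0\}$), statement~\refi is equivalent to $\ker(A_\kappa B_\lambda) = \{0\}$ for all $\kappa \in \RR^r_+$, $\lambda \in \RR^n_+$ — using that $B$ has trivial kernel so $B(\RR^n\setminus\{0\}) = \im(B)\setminus\{0\}$. Since $A_\kappa B_\lambda$ is $n \times n$, this is equivalent to $\det(A_\kappa B_\lambda) \neq 0$ for all such $\kappa, \lambda$; by the Cauchy--Binet computation already performed in Lemma~\ref{lem:prepdet2} (with $s = n$, $Z$ absent, $\tau$ and the $Z$-minor both trivial), $\det(A_\kappa B_\lambda) = \sum_{J} \det(A_{[n],J})\det(B_{J,[n]})\,\kappa^J\lambda^{[n]}$, a nonzero-or-zero homogeneous polynomial of degree at most one in each of $\kappa_1,\dots,\kappa_r,\lambda_1,\dots,\lambda_n$ (it is not the zero polynomial precisely when some minor product is nonzero). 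Lemma~\ref{lem:prepdet1} then says this polynomial is nonvanishing on the positive orthant if and only if it is not the zero polynomial and all its nonzero coefficients share a sign — which is exactly statement~\refii.

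The main obstacle is essentially bookkeeping rather than mathematics: one must be careful that the cardinality-$n$ constraint collapses $I$ to $[n]$ (there is genuinely only one choice), that $B$ full rank kills the $\ker(B) = \{0\}$ side condition so nothing is lost, and that the $s = n$ degenerate case of the $\Gamma_{\kappa,\lambda}$ construction (no $Z$ block) is handled by the stated convention $\det(Z_{[n-s],I^c}) = 1$. Given those checks, the corollary follows immediately; I would write it as a three-line deduction from Corollary~\ref{cor:unrestricted} and Corollary~\ref{cor:jacdet}, or equivalently from Proposition~\ref{pro:lin} plus Lemmas~\ref{lem:prepdet2} and~\ref{lem:prepdet1}.
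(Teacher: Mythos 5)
Your primary argument—chaining Corollary~\ref{cor:unrestricted} (with $\ker(B)=\{0\}$ from the full rank of $B$) and Corollary~\ref{cor:jacdet} (with $\im(A)=\RR^n$ forcing $I=[n]$) through unrestricted injectivity of $f_\kappa$—is exactly the paper's proof, and your bookkeeping checks are all correct. The alternative route via Proposition~\ref{pro:lin} and Lemmas~\ref{lem:prepdet2} and~\ref{lem:prepdet1} is also sound but unnecessary given the first.
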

\begin{proof}
The (unrestricted) injectivity of $f_\kappa(x) = A_\kappa \, x^B$ for all $\kappa \in \RR^r_+$
is equivalent to both \refi by Corollary~\ref{cor:unrestricted} (since $\ker(B)=\{0\}$)
and \refii by Corollary~\ref{cor:jacdet} (since $\im(A)=\RR^n$).
\end{proof}

\section{Applications} \label{sec:app}

The first application of our results is to study steady states (or equilibria) of dynamical systems
induced by generalized polynomial maps.
In Subsection~\ref{subsec:power-law},
we introduce such \emph{power-law systems} and state our results in this setting.
In Subsection~\ref{subsec:special},
we give sign conditions that preclude/guarantee the existence of multiple steady states of a particular form.
In Subsection~\ref{subsec:solve}, we show how our results reveal the first partial multivariate generalization
of Descartes' rule of signs in real algebraic geometry and interpret our results in the language of oriented matroids.

\subsection{Power-law systems} \label{subsec:power-law}

Power-law systems arise naturally as models of systems of interacting species, such as chemical reaction networks.
Other examples include the classical Lotka-Volterra model in ecology~\cite{Murray2002}
and the SIR model in epidemiology~\cite{AndersonMay1991}.

For readers unfamiliar with chemical reaction networks,
we elaborate on the construction of the corresponding dynamical systems.
A chemical reaction network consists of a set of $n$ molecular species and a set of $r$ reactions,
where the  left- and right-hand sides of the reactions are formal sums of species,
called reactant and product complexes, respectively.
A {\em kinetic system} describes the dynamics of the species concentrations $x$,
where each reaction contributes to the dynamics an additive term:
namely, a corresponding reaction vector (the difference between the product and reactant complexes)
multiplied by a particular reaction rate (a nonnegative function of the concentrations, called kinetics).
Thus, a kinetic system has the form 
\begin{equation} \label{eq:kinetic_system}
\dd{x}{t} =  N K(x) ,
\end{equation} 
where the columns of the {\em stoichiometric matrix} $N$ are the reaction vectors
and the $i$th coordinate of $K(x)$ is the rate function of the $i$th reaction.
The right-hand side of~\eqref{eq:kinetic_system} is called the {\em species-formation rate function}.  
In power-law systems,
the kinetics are given by monomials with real exponents~\cite{Savageau}.  
More precisely, the {\em power-law system} arising from the stoichiometric matrix $N \in \RR^{n \times r}$,
a {\em kinetic-order matrix} $V \in \RR^{r \times n}$,
and {\em rate constants} $\kappa \in \RR^r_+$
is the kinetic system~\eqref{eq:kinetic_system} with kinetics $K(x)=\kappa \mal x^V$.
That is, the species-formation rate function $f_\kappa \colon \RR^n_+ \to \RR^n$ is given by
\begin{equation} \label{eq:sfr}
f_\kappa(x) = N_\kappa \, x^V .
\end{equation}
In fact, the domain of $f_\kappa$ may be extended to $\Omega_V \subseteq \overline{\RR}^n_+$,
the maximal subset on which the monomial map $\varphi_V \colon x \mapsto x^V$ is well defined.
We note that, without further restrictions on the matrix $V$,
a power-law system may exhibit physically/chemically meaningless behavior.
For example,
a trajectory starting in the interior may reach the boundary of the positive orthant in finite time with nonzero velocity.

The vector subspace $S=\im(N)$ is the \emph{stoichiometric subspace},
and the sets $(x'+S) \cap \RR^n_+$ for $x' \in \RR^n_+$ are the positive \emph{compatibility classes}.
As explained in the introduction, 
a trajectory starting at a point $x' \in \RR^n_+$
is confined to the coset $x'+S$.
As a consequence,
we study power-law systems 
restricted to compatibility classes. 
In particular, we want to characterize
whether there exist distinct $x,y \in \RR^n_+$ such that $x-y \in S$ and $f_\kappa(x)=f_\kappa(y)=0$ for some $\kappa \in \RR^r_+$.
In our terminology, 
if $f_\kappa$ is injective with respect to $S$ for all $\kappa \in \RR^r_+$,
then no such $x,y$ can exist,
that is, multiple {\em positive} steady states cannot occur within one compatibility class
for any choice of the rate constants.

\begin{example}[Mass-action systems] \label{ex:mass-action}
\emph{Mass-action systems} form a family of power-law systems, and they are 
widely used to model the dynamics of chemical reaction networks. 
In mass-action systems,
the rate of a chemical reaction is a monomial in the concentrations of the reactant species;
more precisely, the exponents of the concentrations are the corresponding {\em stoichiometric coefficients}, i.e., the coefficients of the species in the reactant complex.
As a consequence,
the kinetic-order matrix $V$ is a nonnegative integer matrix,
which encodes for each reaction the stoichiometries of the reactant species,
and the map $f_\kappa(x)$ is a polynomial map in the standard sense with domain $\Omega_V = \overline{\RR}_+^n$. 
Mass-action systems are at the core of the so-called {\em chemical reaction network theory}, initiated by Horn, Jackson, and Feinberg in the 1970s \cite{Feinberg1972,Horn1972,HornJackson1972};
see also the surveys~\cite{feinbergnotes,gu03}.
\end{example}

\begin{example}[Generalized mass-action systems]
The {\em law of mass-action}, proposed by Guldberg and Waage in the 19th century \cite{gw64},
refers to both the formula for chemical equilibrium, which holds for all reactions,
and the formula for the reaction rate (explained in Example~\ref{ex:mass-action}),
which holds only for elementary reactions in homogeneous and dilute solutions.
To model the dynamics of chemical reaction networks in more general environments,
power-law kinetics has been considered under different formalisms \cite{HornJackson1972, Savageau}.
The notion of \emph{generalized mass-action systems} as introduced in \cite{MR,MR14} is a direct extension of mass-action systems, in particular, it includes the inherent structure of chemical reaction networks.
\end{example}

\begin{example}[S-systems]
\emph{S-systems} form another family of power-law systems.
This research area was initiated by the work of Savageau in the late 1960s~\cite{Savageau}.
In S-systems,
the formation rate of each species consists of one production term and one degradation term.
In other words, the components $f_{\kappa,i}(x)$ are binomial,
and each row of the stoichiometric matrix $N$ contains the entries $1$ and $-1$, and all other entries are zero.
S-systems can be used to infer gene regulatory networks, for instance, if the regulation logic is not known or the precise mechanisms are inaccessible.
Further, many common kinetic systems, including
(generalized) mass-action systems,
 can be approximated by S-systems
after a process called recasting \cite{Savageau1987}.
\end{example}

An injectivity criterion for precluding multistationarity in fully open networks with mass-action kinetics
was introduced by Craciun and Feinberg~\cite{ME_I}
and has been extended in various ways~\cite{ME_entrapped,ME_semiopen,BanajiDonnell,gnacadja_linalg,FeliuWiuf_MAK,WiufFeliu_powerlaw}.
Our contribution to this topic builds on these results and is summarized in Theorem~\ref{thm:chem}.
It is a restatement of Theorem~\ref{thm:main} in the setting of power-law systems;
in particular, $m=n$ and $S = \im(N)$ is a vector subspace.
In this case, Corollary~\ref{cor:jacdet} allows us to add the condition~\refmin.
Further,
the condition~\refinj  concerns the injectivity of the generalized polynomial map {\em on compatibility classes},
and \refjac addresses the injectivity of the Jacobian matrix {\em on the stoichiometric subspace}.

\begin{theorem}[Theorem~\ref{thm:main} for power-law systems] \label{thm:chem}
Let $f_\kappa \colon \RR^n_+ \to \RR^n$ be the species-formation rate function $f_\kappa(x) = N_\kappa \, x^V$ of a power-law system,
where $N \in \RR^{n \times r}$, $V \in \RR^{r \times n}$, and $\kappa \in \RR^r_+$.
Further, let $S = \im(N)$ and $s = \rank(N)$.
The following statements are equivalent:
\begin{enumerate}
\item[(inj)]
$f_\kappa$ is injective on every compatibility class, for all $\kappa \in \RR^r_+$.
\item[(jac)]
The Jacobian matrix $J_{f_\kappa}(x)$ is injective on the stoichiometric subspace $S$,
for all $\kappa \in \RR^r_+$ and $x \in \RR^n_+$.
\item[(min)] For all subsets $I\subseteq [n]$, $J\subseteq [r]$ of cardinality $s$,
the product $\det(N_{I,J}) \det(V_{J,I})$ either is zero or has the same sign as all other nonzero such products, 
and moreover, at least one such product is nonzero.
\item[(sig)]
$\sigma(\ker(N)) \cap \sigma(V(\Sigma(S^*))) = \emptyset$.
\end{enumerate}
\end{theorem}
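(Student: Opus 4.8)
The plan is to obtain Theorem~\ref{thm:chem} as a direct specialization of Theorem~\ref{thm:main}, supplemented by Corollary~\ref{cor:jacdet} to adjoin condition \refmin. Concretely, I would apply Theorem~\ref{thm:main} with $A = N$, $B = V$, $m = n$, and $S = \im(N)$, and then translate each abstract statement into the power-law vocabulary.

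First I would dispense with the terminology. Since $S = \im(N)$ is a vector subspace, the positive compatibility classes are exactly the nonempty sets $(x' + S) \cap \RR^n_+$, and for $x, y \in \RR^n_+$ with $x - y \in S$ the points $x$ and $y$ lie in the single coset $x + S$, which meets $\RR^n_+$; hence ``$f_\kappa$ is injective on every compatibility class, for all $\kappa$'' is precisely condition \refinj of Theorem~\ref{thm:main} with $A = N$, $B = V$. Similarly, ``$J_{f_\kappa}(x)$ is injective on $S$'' means $\ker(J_{f_\kappa}(x)) \cap S = \{0\}$, and since $0 \in S$ this is the same as $\ker(J_{f_\kappa}(x)) \cap S^* = \emptyset$, i.e.\ condition \refjac; note here that $J_{f_\kappa}(x)$ is defined on the open orthant, so the quantification over $x \in \RR^n_+$ is exactly the one in Theorem~\ref{thm:main}. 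Finally, condition \refsig of Theorem~\ref{thm:chem} is verbatim condition \refsig of Theorem~\ref{thm:main} after renaming. With these identifications, the equivalences $\refinj \Leftrightarrow \refjac \Leftrightarrow \refsig$ follow immediately from Theorem~\ref{thm:main}.

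It then remains only to fold in condition \refmin. Here I would invoke Corollary~\ref{cor:jacdet}, again with $A = N$ and $B = V$: because $S = \im(N) = \im(A)$ and $s = \rank(N) = \rank(A)$, the corollary yields exactly the equivalence of \refinj (injectivity with respect to $\im(N)$) with the stated minor condition \refmin for the matrices $N$ and $V$. Chaining this equivalence with the three obtained from Theorem~\ref{thm:main} completes the argument.

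I do not anticipate a real obstacle: the whole proof is bookkeeping, since both Theorem~\ref{thm:main} and Corollary~\ref{cor:jacdet} are already available. The only points requiring care are the two dictionary translations above, namely matching ``injective on every compatibility class'' with injectivity with respect to the subspace $\im(N)$, and matching ``the Jacobian is injective on $S$'' with the kernel condition phrased via $S^*$ in Theorem~\ref{thm:main}; both are routine once one recalls (as in the discussion following Theorem~\ref{thm:main}) that injectivity with respect to a subspace is injectivity on each of its cosets.
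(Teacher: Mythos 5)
Your proposal is correct and matches the paper's own treatment: the paper likewise presents Theorem~\ref{thm:chem} as a direct restatement of Theorem~\ref{thm:main} with $m=n$ and $S=\im(N)$ a vector subspace, with Corollary~\ref{cor:jacdet} adjoining condition \refmin. The dictionary translations you spell out (compatibility classes as cosets of $S$, and injectivity of the Jacobian on $S$ as the $S^*$ kernel condition) are exactly the bookkeeping the paper leaves implicit.
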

If the conditions of Theorem~\ref{thm:chem} hold, then multistationarity is precluded.
In the context of chemical reaction networks,
the equivalence of conditions \refinj, \refjac, and \refmin was proven in \cite{WiufFeliu_powerlaw}.
Thus, our contribution is condition \refsig.
 
\begin{remark}
Injectivity results for generalized polynomial maps also preclude multistationarity
for \emph{strictly monotonic} kinetics~\cite{WiufFeliu_powerlaw,feliu-bioinfo},
which include power-law kinetics.
In the study of {\em concordant} networks~\cite{ShinarFeinberg2012},
sign conditions preclude multistationarity for {\em weakly monotonic} kinetics.
Injectivity results for differentiable maps and various classes of kinetics using P-matrices
appear in \cite{BanajiDonnell,BanajiCraciun2009,BanajiCraciun2010,banaji_pantea,Gale:1965p474}.
P-matrices are defined by the positivity of principal minors, which is related to condition \refmin in this work. 
Analysis of the signs of minors of Jacobian matrices with applications to counting steady states appear in \cite{CHW08,HeltonDeterminant,HKK}.
\end{remark}

\subsection{Precluding/guaranteeing special steady states} \label{subsec:special}

In this subsection, we relate results on injectivity and sign vectors
occurring in the chemical reaction literature for ``special'' steady states, under seemingly different hypotheses.
On one side,
we study {\em complex balancing equilibria} defined for mass-action systems~\cite{Feinberg1972,Horn1972,HornJackson1972}
and extended to generalized mass-action systems~\cite{MR};
on the other side, we consider {\em toric steady states}~\cite{TSS}.
The common feature of all these cases is
that the steady states under consideration lie in a generalized variety
that has dual equivalent presentations: via generalized binomial equations and via a generalized monomial parametrization. 
Our results give conditions for precluding multiple special steady states (Proposition~\ref{pro:dual})
and for guaranteeing multiple special steady states (Corollary~\ref{cor:multistat}). 

Given  $M \in  \RR^{d' \times n}$ and $x^* \in \RR^n_+$, we denote the corresponding fiber of
$x \mapsto x^M$ by
\begin{equation*} 
Z^M_{x^*} := \left\{ x \in \RR^n_+ \st x^{M} = (x^*)^{M} \right\} .
\end{equation*}
We note that in the literature on chemical reaction network theory,
the alternate formulation $Z^M_{x^*} = \{ x \in \RR^n_+ \st \ln(x) - \ln(x^*) \in \ker(M) \}$ is used. 
Also, if we denote by $m_i$ the $i$th  row vector of $M$ and write it as $m_i = m_i^+ - m_i^-$ with
$m_i^+,m_i^- \in \overline{\RR}_+^n$, then  for any positive $\gamma_i$, 
the generalized monomial equation $x^{m_i} = \gamma_i$  is equivalent to the generalized binomial equation $x^{m_i^+} - \gamma_i x^{m_i^-}=0$,
when we restrict our attention to $x \in \RR_+^n$.

\begin{definition} \label{def:Gale}
Two matrices $M \in \RR^{d' \times n}$ and $B \in \RR^{n \times d}$
with $\im(B) = \ker(M)$ and $\ker(B) = \{0\}$  
are called \emph{Gale dual}.
\end{definition}
\noindent In the usual definition of Gale duality, the matrix $M$ is required to have full rank $d'= n-d$.

The following lemma is classic.

\begin{lemma} \label{lem:Gale}
Let $M \in \RR^{d' \times n}$ and $B \in \RR^{n \times d}$ be Gale dual. 
Then, for any $x^* \in \RR^n_+$, the fiber $Z^M_{x^*}$ can be parametrized as follows:
\[
Z^M_{x^*} = \{ x^* \mal \e^v \st v \in \ker(M) \}
= \{ x^* \mal \xi^{B} \st \xi \in \RR^d_+ \} .
\]
\end{lemma}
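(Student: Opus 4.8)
The plan is to prove the two set equalities in turn, in each case translating between the multiplicative picture on $\RR^n_+$ under the Hadamard product and the additive picture on $\RR^n$, via the componentwise logarithm, which is a strictly monotone bijection $\RR^n_+ \to \RR^n$ with inverse $\exp$.

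For the first equality I would argue as follows. For $x \in \RR^n_+$, since both $x^M$ and $(x^*)^M$ are positive and $\ln$ is injective, $x^M = (x^*)^M$ holds if and only if $M \ln x = M \ln x^*$, i.e.\ $M(\ln x - \ln x^*) = 0$, i.e.\ $\ln x - \ln x^* \in \ker(M)$. Setting $v := \ln x - \ln x^*$ and exponentiating gives $x = \e^{\ln x^* + v} = x^* \mal \e^v$; conversely, for any $v \in \ker(M)$ the vector $x := x^* \mal \e^v$ lies in $\RR^n_+$ and satisfies $M(\ln x - \ln x^*) = Mv = 0$, so $x \in Z^M_{x^*}$. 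This yields $Z^M_{x^*} = \{ x^* \mal \e^v \st v \in \ker(M) \}$.

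For the second equality I would invoke the Gale duality hypothesis $\im(B) = \ker(M)$. Given $v \in \ker(M) = \im(B)$, choose $w \in \RR^d$ with $v = Bw$; from $(Bw)_j = \sum_k b_{jk} w_k$ one gets $\e^{(Bw)_j} = \prod_k (\e^{w_k})^{b_{jk}}$, that is, the identity $\e^{Bw} = (\e^w)^B$ (with $x^b = \prod_i x_i^{b_i}$ as in Subsection~\ref{subsec:notation}), so $\e^v = \xi^B$ with $\xi := \e^w \in \RR^d_+$. Conversely, given $\xi \in \RR^d_+$, set $w := \ln \xi$; then $\xi^B = \e^{Bw}$ and $Bw \in \im(B) = \ker(M)$. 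Hence $\{ \e^v \st v \in \ker(M) \} = \{ \xi^B \st \xi \in \RR^d_+ \}$, and multiplying through by $x^*$ gives the second equality. (The hypothesis $\ker(B) = \{0\}$ is not needed for these set equalities; it ensures that the correspondence $w = \ln \xi \leftrightarrow v = Bw$ is bijective, so that $\xi \mapsto x^* \mal \xi^B$ is an injective parametrization of the fiber.)

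There is essentially no serious obstacle here: the argument is the standard dictionary between $(\RR^n_+, \mal)$ and $(\RR^n, +)$ furnished by $\ln$ and $\exp$, combined with $\im(B) = \ker(M)$. The only steps demanding a little care are justifying that passing to logarithms is an equivalence rather than a one-way implication (strict monotonicity of $\ln$ on $\RR_+$) and recording the identity $(\e^w)^B = \e^{Bw}$, both of which are routine from the definitions.
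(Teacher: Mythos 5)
Your proof is correct and follows essentially the same route as the paper's: take componentwise logarithms to reduce the fiber condition to $\ln x - \ln x^* \in \ker(M)$, then use $\im(B)=\ker(M)$ together with the identity $\e^{Bw}=(\e^w)^B$ to pass to the monomial parametrization. Your parenthetical remark that $\ker(B)=\{0\}$ is only needed for injectivity of the parametrization, not for the set equalities, is a correct observation consistent with the paper's argument.
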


\begin{proof}
We start by proving the first equality.
We have $x\in Z^M_{x^*}$ if and only if $x^M=(x^*)^M$, which is equivalent to $M \, (\ln x - \ln x^*) = 0$.
Therefore, $x\in Z^M_{x^*}$ if and only if $v := \ln x - \ln x^* \in \ker(M)$, that is, $x = x^* \mal \e^v$ with $v \in \ker(M)$. 
Now we turn to the second equality.
Since the columns of $B$ form a basis for $\ker(M)$,
we can write $v \in \ker(M)$ uniquely as $v = B \, t$ for some $t \in \RR^d$.
By introducing $\xi := \e^t \in \RR^d_+$, we obtain
\[
(\e^v)_i = \e^{v_i} = \e^{\sum_j b_{ij} t_j} = \textstyle{\prod_j \xi_j^{b_{ij}}} = \xi^{b_{i}} = (\xi^B)_i ,
\]
that is, $\e^v = \xi^{B}$, so the inclusion $\subseteq$ holds.  Similarly, $\supseteq$ holds via $v:=B \log \xi$.
\end{proof}

We consider a power-law system~\eqref{eq:sfr}
and assume that the set of steady states
contains the positive part of a generalized variety defined by generalized binomials,
according to the following definition. Recall the connection between certain monomial and binomial equations explained before Definition~\ref{def:Gale}.

\begin{definition}\label{def:sss}
Let $f_\kappa \colon \RR^n_+ \to \RR^n$ be the species-formation rate function $f_\kappa(x) = N_\kappa \, x^V$ of a power-law system,
where $N \in \RR^{n \times r}$, $V \in \RR^{r \times n}$, and $\kappa \in \RR^r_+$.
Further, let $M \in \RR^{d' \times n}$ and $\gamma\colon \RR^r_+\rightarrow  \RR_+^{d'}$.
Consider the family of generalized varieties
\[
Y^{M,\gamma}_\kappa := \left\{x \in \RR^n_+ \st x^M = \gamma(\kappa) \right\} \quad \text{for } \kappa \in \RR_+^r ,
\]
and assume that each such generalized variety consists of steady states of the corresponding power-law system:
\[
Y^{M,\gamma}_\kappa \subseteq \{ x\in \RR^n_+ \st f_\kappa(x) = 0\} \quad \text{for all } \kappa \in \RR^r_+ .
\]
An element $x^* \in Y^{M,\gamma}_{\kappa^*}$ is called a \emph{special steady state for $\kappa^*$}.
\end{definition}

According to the definition,
$x^*$ is a special steady state for $\kappa^*$
if and only if $(x^*)^M=\gamma(\kappa^*)$, or, equivalently, $Y^{M,\gamma}_{\kappa^*} = Z^M_{x^*}$.
Clearly, if $\gamma(\kappa^*)$ does not belong to the image of the monomial map $\varphi_M\colon x\mapsto x^M$,
then $Y^{M,\gamma}_{\kappa^*} =\emptyset$.
As already mentioned,
special steady states include {\em complex balancing equilibria} of generalized mass-action systems~\cite{MR} and {\em toric steady states}~\cite{TSS}.
In both cases, the relevant map $\gamma$ is a rational function.  

Consider $N,$ $V,$ $M$, and $\gamma$ as in Definition~\ref{def:sss}.
Let $x^* \in \RR^n_+$ be a special steady state for $\kappa^*$.
By Lemma~\ref{lem:Gale}, the corresponding set of special steady states $Y^{M,\gamma}_{\kappa^*} = Z^M_{x^*}$
can be parametrized as $\{ x^* \mal \xi^{B} \st \xi \in \RR^d_+ \}$,
where $B\in \RR^{n\times d}$ with $\im(B) = \ker(M)$ and $\ker(B) = \{0\}$.
In fact,
we are interested in the intersection of the set of special steady states with some compatibility class,
\begin{equation*} \label{eq:intersection_special}
Z^M_{x^*} \cap (x'+S).
\end{equation*}
If the intersection is nonempty, then there exist $\xi \in \RR^d_+$ and $u \in S$ such that
\[
x^* \mal \xi^{B} = x' + u,
\]
and multiplication by a matrix $A \in \RR^{m \times n}$ for which $\ker(A) = S$ yields
\[
A \, (x^* \mal \xi^{B}) = A \, x' .
\]
Thus, using $\ker(B)=\{0\}$,
injectivity of the generalized polynomial map $f_{x^*}\colon \RR^d_+ \to \RR^n_+$,
\[
f_{x^*} (\xi) = A \, (x^* \mal \xi^{B}) = A_{x^*} \, \xi^B,
\]
is equivalent to the uniqueness of special steady states  
in every compatibility class.
Therefore, if $f_{x^*}$ is injective for {\em all} $x^* \in \RR_+^n$,
as characterized in Proposition~\ref{pro:dual} below,
then multiple special steady states are precluded for {\em all} rate constants.
Note that Theorem~\ref{thm:chem} precludes multiple ``general'' steady states.

\begin{proposition} \label{pro:dual} 
Let $M \in \RR^{d' \times n}$ and $B \in \RR^{n \times d}$ be Gale dual, 
$S \subseteq \RR^n$ be a vector subspace, and $A \in \RR^{m \times n}$ such that $S = \ker(A)$.
The following statements are equivalent:
\begin{enumerate}[(i)]
\item The monomial map $\varphi_M \colon \RR^n \to \RR^{d'}$, $x \mapsto x^M$ is injective
on $(x'+S) \cap \RR^n_+$, for all $x' \in \RR^n_+$.
\item $\sigma(\ker(M)) \cap \sigma(S) = \{0\}$.
\item The polynomial map $f_{x^*} \colon \RR^d_+ \to \RR^m, \, \xi \mapsto A_{x^*} \, \xi^B$ is injective, for all $x^* \in \RR^n_+$.
\end{enumerate}
\end{proposition}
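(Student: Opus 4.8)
The plan is to establish the cycle of implications $(i) \Leftrightarrow (ii)$ and $(ii) \Leftrightarrow (iii)$, using the machinery already developed. For $(i) \Leftrightarrow (ii)$, observe that $\varphi_M$ is injective on $(x'+S) \cap \RR^n_+$ for all $x'$ precisely when $\varphi_M$ is injective with respect to $S$ in the sense of Definition~\ref{def:main_inj} (using that $S$ is a vector subspace, so cosets of $S$ are exactly the sets $x'+S$). By Proposition~\ref{pro:mono} applied to the matrix $M$ and the subspace $S$, this is equivalent to $\sigma(\ker(M)) \cap \sigma(S^*) = \emptyset$, which, since $0 \in \ker(M)$ and $0 \in S$ always contribute the zero sign vector, is exactly the statement $\sigma(\ker(M)) \cap \sigma(S) = \{0\}$. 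This gives $(i) \Leftrightarrow (ii)$ directly.

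For $(ii) \Leftrightarrow (iii)$, the idea is to invoke Corollary~\ref{cor:unrestricted} for the map $f_{x^*}(\xi) = A_{x^*} \, \xi^B$. That corollary (with the roles of the exponent matrix played by $B$ and the coefficient matrix by $A$) says that $f_{x^*}$ is injective for all positive parameters if and only if $\ker(B) = \{0\}$ and $\sigma(\ker(A)) \cap \sigma(\im(B)) = \{0\}$. The first condition $\ker(B) = \{0\}$ is part of the Gale duality hypothesis, so it is automatic. For the second, we use $S = \ker(A)$ and $\im(B) = \ker(M)$ (again from Gale duality), so $\sigma(\ker(A)) \cap \sigma(\im(B)) = \sigma(S) \cap \sigma(\ker(M))$, and the condition becomes exactly $(ii)$. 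One subtlety: the parameter $x^*$ in $f_{x^*}$ enters as $A_{x^*} \, \xi^B$ where $A_{x^*} = A \diag(x^*)$ with $x^* \in \RR^n_+$, which is precisely the form $A_\kappa$ with $\kappa = x^*$ ranging over $\RR^n_+$ — so quantifying over all $x^* \in \RR^n_+$ matches quantifying over all positive parameters in Corollary~\ref{cor:unrestricted}. Hence $(iii) \Leftrightarrow (ii)$.

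The main obstacle, such as it is, is bookkeeping the matrix dimensions and making sure the hypotheses of the cited results are genuinely satisfied: Corollary~\ref{cor:unrestricted} is stated for a map $\RR^n_+ \to \RR^m$ with a coefficient matrix in $\RR^{m \times r}$ and exponent matrix in $\RR^{r \times n}$, and here the domain has dimension $d$, the exponent matrix is $B \in \RR^{n \times d}$ (so $r = n$ in the corollary's notation), and the coefficient matrix is $A_{x^*} \in \RR^{m \times n}$. One should double-check that $\im(B)^* = B((\RR^d)^*)$, which holds because $\ker(B) = \{0\}$ — this is exactly the place in the proof of Corollary~\ref{cor:unrestricted} where injectivity of the monomial part is used, so it transfers without trouble. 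No degree theory or new estimates are needed; the proposition is essentially a repackaging of Proposition~\ref{pro:mono} and Corollary~\ref{cor:unrestricted} through the Gale duality dictionary $\im(B) = \ker(M)$, $S = \ker(A)$.
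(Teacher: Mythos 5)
Your proof is correct and follows essentially the same route as the paper: (i)$\Leftrightarrow$(ii) via Proposition~\ref{pro:mono} applied to $M$ and the subspace $S$, and (ii)$\Leftrightarrow$(iii) via Corollary~\ref{cor:unrestricted} combined with the Gale duality identities $\ker(B)=\{0\}$, $\im(B)=\ker(M)$, and $S=\ker(A)$. The only difference is that you spell out the dimension bookkeeping and the role of $x^*$ as the parameter $\kappa$, which the paper leaves implicit.
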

\begin{proof}
Statement \refii is equivalent to $\sigma(\ker(A)) \cap \sigma(\im(B)) = \{0\}$, by the definitions of the matrices.
\refi $\Leftrightarrow$ \refii holds by Proposition~\ref{pro:mono} (for a vector subspace $S$).
\refiii $\Leftrightarrow$ \refii holds by Corollary~\ref{cor:unrestricted}.
\end{proof}
In other words,
injectivity of monomial maps on cosets of a vector subspace
is equivalent to
injectivity of a related family of polynomial maps on the positive orthant.

\begin{remark}
Related sign conditions for injectivity appear in 
\cite[Lemma~4.1]{Feinberg1995b}, \cite[Lemma~1]{MAPK}, \cite[Theorem~5.5]{TSS}, and \cite[Proposition~3.1 and Theorem~3.6]{MR}.
In generalized mass-action systems \cite{MR},
uniqueness of {\em complex balancing equilibria} is guaranteed by the sign condition $\sigma(S) \cap \sigma(\tilde{S}^\perp) = \{0\}$,
where $\tilde{S}$ is the kinetic-order subspace with $\tilde{S}^\perp = \ker(M) = \im(B)$.
In the specific case of mass-action systems,
the stoichiometric and kinetic-order subspaces coincide, $S = \tilde{S}$,
and hence $\sigma(S) \cap \sigma(S^\perp) = \{0\}$ holds trivially. Further, in this case, if complex balancing equilibria exist, all steady states are of this form~\cite[Theorem 6A]{HornJackson1972}
and multistationarity cannot occur.
The sign condition for precluding multiple {\em toric steady states} \cite{TSS}
takes the form $\sigma(\im(\mathcal{A}^T)) \cap \sigma(\ker(\mathcal{Z}^T)) = \{0\}$,
where we use calligraphic fonts to avoid confusion with symbols in this work.
The matrix $\mathcal{A}$ specifies the parametrization of $Z$,
whereas the matrix $\mathcal{Z}$ defines the stoichiometric subspace~$S$:
$\ker(M) = \im(\mathcal{A}^T)$ and $S = \ker(\mathcal{Z}^T)$.
\end{remark}

We close this subsection by considering the case when statement \refii in Proposition~\ref{pro:dual} does not hold.
In this case, multiple special steady states in one compatibility class are possible, provided that every $x^*\in \RR^n_+$ 
is a special steady state for some $\kappa^*$.

\begin{corollary} \label{cor:multistat}
Let $f_\kappa \colon \RR^n_+ \to \RR^n$ be the species-formation rate function $f_\kappa(x) = N_\kappa \, x^V$ of a power-law system
with stoichiometric subspace $S = \im(N)$,
where $N \in \RR^{n \times r}$, $V \in \RR^{r \times n}$, and $\kappa \in \RR^r_+$.
Further, let $M \in \RR^{d' \times n}$, $\gamma \colon \RR^r_+\rightarrow \RR_+^{d'}$, and $Y^{M,\gamma}_\kappa$ be a set of special state states as in Definition~\ref{def:sss}.
Assume that:
\begin{itemize}
\item[(i)] $\sigma(\ker(M)) \cap \sigma(S) \neq \{0\}$.
\item[(ii)] For all $x \in \RR^n_+$, there exists $\kappa \in \RR^r_+$ such that $x \in Y^{M,\gamma}_{\kappa}$.
\end{itemize}
Then there exist $\kappa^* \in \RR^{r}_+$ and distinct $x^*,y^* \in \RR^n_+$
such that
\[
x^*, y^* \in Y^{M,\gamma}_{\kappa^*} \quad \text{and} \quad x^* - y^* \in S .
\]
In other words, there exist multiple special steady states in some compatibility class.
\end{corollary}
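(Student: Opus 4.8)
The plan is to exploit assumption~(i) together with Lemma~\ref{lem:lambda} to produce two distinct positive points that differ by an element of $S$ and lie in the same fiber of $\varphi_M$, and then to invoke assumption~(ii) to realize one of them---hence both---as special steady states for a common rate constant.

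First I would use assumption~(i) to extract a \emph{nonzero} sign vector $\tau \in \sigma(\ker(M)) \cap \sigma(S)$, which exists precisely because the intersection, always containing $0$, is strictly larger than $\{0\}$. Choosing $v \in \ker(M)$ with $\sigma(v) = \tau$, we have $v \neq 0$, and since $\tau \in \sigma(S)$ also $v \in \Sigma(S)$. By Lemma~\ref{lem:lambda}, $\Sigma(S) = \Lambda(S)$, so there exist $x^*, y^* \in \RR^n_+$ with $\ln x^* - \ln y^* = v$ and $x^* - y^* \in S$; because $v \neq 0$, the points $x^*$ and $y^*$ are distinct. Next I would observe that $v \in \ker(M)$ gives $M(\ln x^* - \ln y^*) = 0$, equivalently $(x^*)^M = (y^*)^M$, i.e.\ $y^* \in Z^M_{x^*}$. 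Applying assumption~(ii) to $x^*$, choose $\kappa^* \in \RR^r_+$ with $x^* \in Y^{M,\gamma}_{\kappa^*}$, that is, $(x^*)^M = \gamma(\kappa^*)$. Then $(y^*)^M = (x^*)^M = \gamma(\kappa^*)$, so $y^* \in Y^{M,\gamma}_{\kappa^*}$ as well. Since $x^* - y^* \in S$ and $x^* \neq y^*$, the pair $(x^*, y^*)$ is the desired witness, and by Definition~\ref{def:sss} both points are genuine steady states, lying in the same compatibility class.

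The argument is essentially a direct unwinding of the definitions; the only point requiring care is the correct use of Lemma~\ref{lem:lambda} to convert the abstract sign-vector overlap of~(i) into concrete positive vectors $x^*, y^*$ enjoying the two required properties \emph{simultaneously} (same fiber of $\varphi_M$ and difference in $S$), together with the recognition that assumption~(ii) is exactly what pins down a single $\kappa^*$ serving both points. I do not expect a substantial obstacle beyond this bookkeeping.
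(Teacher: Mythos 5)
Your proof is correct and takes essentially the same approach as the paper: where the paper simply invokes the equivalence \refii $\Leftrightarrow$ \refi of Proposition~\ref{pro:dual} to produce two distinct positive points in a common fiber of $\varphi_M$ and a common coset of $S$, you unwind that equivalence directly via Lemma~\ref{lem:lambda}. The final step of using assumption \refii to pin down a single $\kappa^*$ for both points is identical to the paper's.
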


\begin{proof}
Assume $\sigma(\ker(M)) \cap \sigma(S) \neq \{0\}$.
By \refii $\Leftrightarrow$ \refi in Proposition~\ref{pro:dual},
there exist $x^*,y^* \in \RR^n_+$ with $x^* \neq y^*$, $x^*-y^*\in S$, and $(x^*)^M =(y^*)^M$, that is, $x^*,y^*\in Z^M_{x^*}$.
 By assumption \refii, there exists $\kappa^*\in \RR^r_+$ such that $x^* \in Y^{M,\gamma}_{\kappa^*}$, that is,
$Z^M_{x^*} = Y^{M,\gamma}_{\kappa^*}$.
Hence, $x^*,y^*  \in Y^{M,\gamma}_{\kappa^*}$.
\end{proof}

In the case of complex balancing equilibria,
the crucial assumption \refii in Corollary~\ref{cor:multistat}
follows from weak reversibility (cf.~\cite[Lemma~3.3]{MR}).
In the case of toric steady states, 
it is guaranteed by the existence of a positive toric steady state for some $\kappa$
or, equivalently,  by the existence of a positive vector in the kernel of $N$ (cf.~\cite[Theorem~5.5]{TSS}). 

\subsection{Solving systems of generalized polynomial equations} \label{subsec:solve}

In this subsection, we prove the partial multivariate generalization of Descartes' rule, Theorem~\ref{thm:Descartes}.
The bound on the number of positive solutions in statement~\refbound is a direct consequence of Corollaries~\ref{cor:unrestricted} and \ref{cor:det},
and it was proved  
in previous works, e.g.\ in \cite[Corollary~8]{CGS}.
The existence of positive solutions in statement~\refsurj 
relies on the surjectivity result in~\cite[Theorem~3.8]{MR}.
The framework of our results is the theory of oriented matroids,
which is concerned with combinatorial properties of geometric configurations.

\begin{proposition} \label{pro:polyequ}
Let $A \in \RR^{m \times r}$ and $B \in \RR^{r \times n}$ with full rank $n$.
The following statements are equivalent:
\begin{enumerate}[(i)]
\item
For all $\kappa \in \RR^r_+$ and $y \in \RR^m$,
the system of $m$ generalized polynomial equations in $n$ unknowns
\begin{equation*}
\sum_{j =1}^r a_{ij} \, \kappa_j \,  x_1^{b_{j1}} \cdots x_n^{b_{jn}}= y_i, \quad i = 1, \dots,m, 
\end{equation*}
has at most one positive real solution $x \in \RR^n_+$.
\item
$\sigma(\ker(A)) \cap \sigma(\im(B)) = \{0\}$.
\end{enumerate}
\end{proposition}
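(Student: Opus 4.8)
The plan is to recognize statement~(i) as the assertion of unrestricted injectivity of the associated generalized polynomial map, and then to invoke Corollary~\ref{cor:unrestricted}. First I would observe that the system displayed in~(i) is precisely the equation $f_\kappa(x) = y$, where $f_\kappa(x) = A_\kappa \, x^B$ is the generalized polynomial map of Definition~\ref{def:main_gen}. Demanding that, for every $y \in \RR^m$, this system have at most one solution $x \in \RR^n_+$ is exactly the statement that the map $f_\kappa \colon \RR^n_+ \to \RR^m$ is injective. Thus~(i) is equivalent to: $f_\kappa$ is injective for all $\kappa \in \RR^r_+$.

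Next I would apply Corollary~\ref{cor:unrestricted}, which states that $f_\kappa$ is injective for all $\kappa \in \RR^r_+$ if and only if $\ker(B) = \{0\}$ and $\sigma(\ker(A)) \cap \sigma(\im(B)) = \{0\}$. Since $B \in \RR^{r \times n}$ has full rank $n$, its columns are linearly independent, so $\ker(B) = \{0\}$ holds automatically. Hence the pair of conditions in Corollary~\ref{cor:unrestricted} collapses to the single condition $\sigma(\ker(A)) \cap \sigma(\im(B)) = \{0\}$, which is statement~(ii), and the equivalence is established.

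I do not anticipate any genuine obstacle here: the proposition is essentially a restatement of Corollary~\ref{cor:unrestricted} in the case $\ker(B) = \{0\}$, which is forced by the full-rank hypothesis on $B$. The only mildly delicate point is the reformulation of ``at most one positive solution for every right-hand side'' as injectivity of $f_\kappa$, but this is immediate from Definition~\ref{def:main_inj} (with $S = \RR^n$). If one preferred an argument routed directly through Theorem~\ref{thm:main} rather than through its corollary, one could take $S = \RR^n$ there: then $\Sigma(S^*) = S^*$, and since $\ker(B) = \{0\}$ one has $B(S^*) = \im(B) \setminus \{0\}$, so condition~(sig) reads $\sigma(\ker(A)) \cap \sigma(\im(B) \setminus \{0\}) = \emptyset$, which is equivalent to $\sigma(\ker(A)) \cap \sigma(\im(B)) = \{0\}$, again recovering~(ii).
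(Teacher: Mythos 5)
Your proposal is correct and follows essentially the same route as the paper: identify statement~(i) with unrestricted injectivity of $f_\kappa$ for all $\kappa$, then apply Corollary~\ref{cor:unrestricted}, using the full-rank hypothesis on $B$ to dispose of the condition $\ker(B)=\{0\}$. The paper's proof is just a terser version of yours (it leaves the $\ker(B)=\{0\}$ observation implicit), so there is nothing to add.
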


\begin{proof}
The left-hand side of the equation system in \refi is the image of $x$
under the generalized polynomial map $f_\kappa \colon \RR^n_+ \to \RR^m, x \mapsto A_\kappa \, x^B$.
Thus, statement \refi is equivalent to the injectivity of $f_\kappa$ for all $\kappa \in \RR^r_+$. 
So, by Corollary~\ref{cor:unrestricted}, \refi $\Leftrightarrow$ \refii.
\end{proof}

We can now prove the bound in the partial multivariate generalization of Descartes' rule.  

\begin{proof}[Proof of \refbound in Theorem~\ref{thm:Descartes}]
By Corollary~\ref{cor:det},
the hypothesis of \refbound in Theorem~\ref{thm:Descartes} is equivalent to statement \refii in Proposition~\ref{pro:polyequ} for $m=n$.
The equivalent condition \refi in Proposition~\ref{pro:polyequ} implies the conclusion of Theorem~\ref{thm:Descartes},
by setting $\kappa = (1, \ldots, 1)^T$.
\end{proof}

Next, we relate our results to the theory of oriented matroids.
With a vector configuration $A = (a^1, \ldots, a^r) \in \RR^{n \times r}$ of $r$ vectors spanning $\RR^n$,
one can associate the following data, each of which encodes the combinatorial structure of $A$.
On one side,
the {\em chirotope} of $A$, defined by the signs of maximal minors,
\begin{align*}
\chi_A \colon \{1, \ldots , r\}^n &\to \{-, 0, +\} \\
(i_1, \dots, i_n) &\mapsto \sign(\det(a^{i_1} , \ldots , a^{i_n})),
\end{align*}
records for each $n$-tuple of vectors whether it forms a positively oriented basis of $\RR^n$, 
forms a negatively oriented basis, or is not a basis.
On the other side, 
the set of {\em covectors} of $A$,
\[
\mathcal{V}^*(A) = \left\{ \left( \sign(t^T a^1 ), \ldots , \sign(t^T a^r) \right) \in \{-,0,+\}^r \st t \in \RR^n \right\},
\]
encodes the set of all ordered partitions of $A$ into three parts, induced by hyperplanes through the origin. 
Equivalently, the covectors of $A$ are the sign vectors of $A^T$,
\[
\mathcal{V}^*(A) = \sigma \left( \im(A^T) \right) ,
\]
since for $x = A^T t \in \RR^r$ with $t \in \RR^n$, we have
\[
\sigma(x)_i
= \sign(x_i)
= \sign\left( {\textstyle \sum_j} (A^T)_{ij} \, t_j \right)
= \sign\left( {\textstyle \sum_j} \, t_j a_{ji} \right)
= \sign( t^T a^i),
\] 
and hence $\sigma(x) = \left( \sign(t^T a^1 ), \ldots , \sign(t^T a^r) \right)$.

Further, the set of {\em vectors} of $A$, denoted by $\mathcal{V}(A)$, is the orthogonal complement of $\mathcal{V}^*(A)$.
We note that two sign vectors $\mu, \nu \in \{-, 0, +\}^n$ are {\em orthogonal}
if $\mu_i \nu_i = 0$ for all $i$ or if there exist $i,j$ with $\mu_{i} \nu_{i} = +$ and $\mu_{j} \nu_{j} = -$.
We have
\[
\mathcal{V}(A) = \mathcal{V^*}(A)^\bot = \sigma(\im(A^T))^\bot = \sigma(\im(A^T)^\bot) = \sigma(\ker(A)) ,
\]
where we use $\sigma(S)^\bot = \sigma(S^\bot)$ for any vector subspace $S \subseteq \RR^n$, cf.~\cite[Proposition~6.8]{Ziegler1995}.

The \emph{oriented matroid} of $A$ is a combinatorial structure
that can   
be given by any of these data (chirotopes, covectors, vectors)
and defined/characterized in terms of any of the corresponding axiom systems
\cite{BjornerLasSturmfelsWhiteZiegler1999, Richter_Ziegler, Ziegler1995}.
The proofs for the equivalences among these data/axiom systems are nontrivial.
We note that $\chi_A$ and $-\chi_A$ define the same oriented matroid.

We may now express the sign condition in Proposition~\ref{pro:polyequ} in terms of oriented matroids.
Clearly, $\sigma(\ker(A)) \cap \sigma(\im(B)) = \{0\}$
if and only if $\mathcal{V}(A) \cap \mathcal{V^*}(B^T) = \{0\}$.
In other words, no nonzero vector of $A$ is orthogonal to all vectors of $B^T$,
or, equivalently, no nonzero covector of $B^T$ is orthogonal to all covectors of $A$.

Analogously, we translate the sign conditions in statement~\refsurj of Theorem~\ref{thm:Descartes}.
Indeed, the maximal minors of $A$ and $B$ have the same (opposite) sign(s) if and only if $\chi_A = \pm \chi_{B^T}$,
that is, if and only if $A$ and $B^T$ define the same oriented matroid.

The proof of statement~\refsurj in Theorem~\ref{thm:Descartes} combines our injectivity result, Proposition~\ref{pro:polyequ}, 
with a surjectivity result from previous work~\cite[Theorem~3.8]{MR} to guarantee the existence and uniqueness of a positive solution.
In fact, \refsurj restates a generalization of Birch's theorem~\cite[Proposition~3.9]{MR} in terms of polynomial equations.

\begin{proof}[Proof of \refsurj in Theorem~\ref{thm:Descartes}]
Clearly, if \eqref{eq:y} has a positive solution $x \in {\RR^n_+}$, then $y \in C^\circ(A)$.
Conversely, the sign conditions in \refsurj,
together with the full rank of the matrices,
imply the hypotheses of~\refbound, and hence there is at most one positive solution of~\eqref{eq:y}.
Further, they imply that $A$ and $B^T$ define the same oriented matroid, and hence $\sigma(\im(A^T)) = \sigma(\im(B))$.
Finally, the assumption about the row vectors of $B$ implies the sign condition $(+,\ldots,+)^T \in \sigma(\im(A^T))$.

By~\cite[Theorem~3.8]{MR},
the generalized polynomial map $f_\kappa \colon \RR^n_+ \to C^\circ \subseteq \RR^n, x \mapsto A_\kappa \, x^B$
is surjective for all $\kappa \in \RR^r_+$.
Clearly, the left-hand side of the equation system~\eqref{eq:y} is the image of $x$
under the generalized polynomial map $f_\kappa$ for $\kappa = (1,\ldots,1)^T$.
Hence the equation system has at least one solution $x \in \RR_+^n$ for all $y \in C^\circ(A) \subseteq \RR^n$.
(We note that the relevant objects in~\cite[Theorem~3.8]{MR} are $F(\lambda) = f_\kappa(x)$
with $\lambda = \ln x$, $V=A^T$, $\tilde{V}=B$, and $c^*=\kappa$.)
\end{proof}

Observe that statement \refsurj in Theorem~\ref{thm:Descartes} can also be stated for a fixed exponent matrix $B \in \RR^{r \times n}$
with full rank $n$ and row vectors lying in an open half-space.
Then, for any coefficient matrix $A \in \RR^{n \times r}$ such that $A$ and $B^T$ define the same oriented matroid,
the equation system~\eqref{eq:y} has exactly one positive solution $x \in \RR_+^n$, for any $y \in C^\circ(A)$.
Alternatively, the hypotheses of statement \refsurj can be expressed in a more symmetric way: ``consider 
matrices $A \in \RR^{n \times r}$ and $B \in \RR^{r \times n}$ such that $A$ and $B^T$ define the same oriented matroid and the column vectors of $A$ (or, equivalently, the row vectors of $B$) lie in an open halfspace''.

\begin{remark} \label{rmk:homotopy}
In many applications, the existence of positive solutions is guaranteed,
for instance, as in item~\refsurj above or by a fixed-point argument,
in which case the sign condition in Proposition~\ref{pro:polyequ} suffices to ensure the existence and uniqueness of positive solutions.
In this setting, homotopy continuation methods can be used to obtain the solution for a given system~\cite{SommeseWampler}.  
Namely, we identify one system in the family that has a unique solution
-- by choosing the coefficients $a_{ij}$ and right-hand sides $y_i$ appropriately, we can ensure that $x=(1,\dots,1)$ is the unique solution --
and then we follow the unique positive solution while performing the homotopy by deforming the parameters of the solved system to those of our given system.  
However, this need not always work, since the followed solution can fail to remain positive along the way.
\end{remark}


\section{Algorithmic verification of sign conditions} \label{sec:algo}

\newcommand{\QQ}{{\mathbb Q}}
\newcommand{\T}{{T}}

In this section we outline how the sign condition \refsig in Theorem~\ref{thm:main} can be verified algorithmically.
Recall that for matrices $A\in \RR^{m \times r}$, $B\in \RR^{r \times n}$, and a subset $S \subseteq \RR^n$,
the condition
\[
\text{\refsig} \qquad \sigma(\ker(A)) \cap \sigma(B(\Sigma(S^*))) = \emptyset
\]
is equivalent to the injectivity of $f_\kappa(x) = A_\kappa \, x^B$ with respect to $S$, for all $\kappa \in \RR^r_+$.
A characterization of condition \refsig  in terms of determinants and signs of maximal minors  is given in 
Theorem~\ref{thm:jacdet} for the special case where $S$ is a vector subspace with $\dim (S)=\rank(A)$.

We assume that the two matrices have rational entries:
$A\in \QQ^{m \times r}$ and $B\in \QQ^{r \times n}$. As discussed in the introduction, $f_\kappa$ 
is injective with respect to $S$ if and only if  it is injective with respect to any subset $S'$ 
for which $S\subseteq S'\subseteq \Sigma(S) = \sigma^{-1}(\sigma(S))$. The subset $\Sigma(S)$ 
depends only on the set of nonzero sign vectors $\sigma(S)$ of $S$.
Therefore, we assume that a set of sign vectors $T \subseteq \{-,0,+\}^n\setminus \{0\}$ 
is given and discuss how to  check condition \refsig 
for the corresponding union of (possibly lower-dimensional) orthants $\sigma^{-1}(T)$, that is, whether 
\[
\text{\refsig} \qquad \sigma(\ker(A)) \cap \sigma(B(\sigma^{-1}(T))) = \emptyset
\]
 holds.
Clearly, \refsig holds if and only if there do not exist sign vectors $\mu \in \{-,0,+\}^r$ and $\tau \in \T$ such that 
\[
\mu \in \sigma(\ker(A)) \cap \sigma(B(\sigma^{-1}(\tau))),
\]
or, equivalently, if for all $\mu \in \{-,0,+\}^r$ and all $\tau \in \T$ the system of linear inequalities 
\begin{equation}
\label{eq:system}
Ax=0, \quad \sigma(x)=\sigma(By)=\mu, \quad \sigma(y)=\tau
\end{equation}
is infeasible, that is, the system~\eqref{eq:system} has no solution $z=(x,y) \in  \RR^{r+n}$. 

Linear inequalities arise from the sign equalities  in \eqref{eq:system}. 
Some of these are strict inequalities and hence techniques from linear programming do not directly apply. 
However, since the inequalities are homogeneous, the set of solutions to \eqref{eq:system} forms a convex cone. 
In particular, if $z$ is a solution, then so is $ \lambda  z$, for all $\lambda \in \RR_+$.  Therefore, 
we can verify the infeasibility of \eqref{eq:system} by checking the infeasibility of the system of linear 
inequalities obtained by replacing the inequalities $>0$ and $<0$ by $\geq \epsilon$ and $\leq -\epsilon$, 
respectively, for an arbitrary $\epsilon \in \RR_+$. In this setting, one can apply methods for exact linear 
programming, which makes use of Farkas' lemma
to guarantee the infeasibility of linear programs by way of rational certificates; 
see for  example~\cite{ApplegateCookDashEspinoza2007,AlthausDumitriu2012,GleixnerSteffyWolter2012} 
and the exact linear programming solver \textsc{QSopt\_ex}~\cite{ApplegateCookDashEspinoza2009}. 
An alternative is to develop and adapt exact linear programming methods for strict inequalities using  
Theorems of the Alternative (Transposition theorems); see for example~\cite{Mangasarian1994,Schrijver1986}.
Using this approach, we might need to test the infeasibility of  system \eqref{eq:system} for $3^r$ 
times the cardinality of $T$ choices of pairs $\mu \in \{-,0,+\}^r$ and $\tau \in \T$.

To apply this approach, we need to compute the set of sign vectors $\sigma(S)$ of $S$. 
In the applications in Section~\ref{sec:app}, the subset $S$  is a vector subspace. 
In this case, the set of sign vectors $\sigma(S)$ are the covectors of the corresponding 
oriented matroid. Chirotopes can be used to compute covectors with minimal support, 
which are called cocircuits. Covectors can be computed from cocircuits. 
In general, the number of covectors can be exponentially large compared to the number of cocircuits.
For example, $\RR^n$ has $3^n$ covectors and $n$ cocircuits corresponding to the vectors of the standard basis.
Therefore it is reasonable to measure the complexity of enumeration algorithms as a function of input and output sizes. 
By this measure, an efficient polynomial-time algorithm that generates all covectors 
from cocircuits is discussed in~\cite{BabsonFinschiFukuda2001}.  Note that one also can use chirotopes to test 
directly whether the oriented matroids corresponding to two vector subspaces are equal, which is the condition for 
existence and uniqueness of positive real solutions in item~\refsurj of Theorem~\ref{thm:Descartes}.

For the special case of unrestricted injectivity (cf.\ Corollary~\ref{cor:unrestricted} and Proposition~\ref{pro:dual}),  
condition~\refsig reduces to the condition 
\begin{equation*}
\label{eq:intersect}
\sigma(\ker(A)) \cap \sigma(\im(B)) = \{0\},
\end{equation*}
for matrices $A \in \RR^{m \times r}$ and $B \in \RR^{r \times n}$, such that $A$ has full rank $m$ and $B$ has full rank $n$. 
In other words, we must check whether the two vector subspaces $\ker(A) $ and $\im(B)$ 
have a common nontrivial sign vector, or, equivalently, whether the corresponding oriented matroids have a common covector. 
For $m=n$, this condition is characterized in Corollary~\ref{cor:det} in terms of signs of products of maximal minors. 
For $m>n$, it is shown in~\cite[Theorem~5.5]{Chaiken1996} that for integer matrices the problem is strongly NP-complete. 

We cannot hope for a polynomial-time algorithm to verify condition \refsig in general.
A software to find nonzero sign vectors in  $\sigma(\ker(A)) \cap \sigma(\im(B))$  is described in~\cite{Uhr2012}. 
It uses mixed linear integer programming and branch-and-bound methods for enumerating all sign vectors,  
and has been successfully  applied to establish multistationarity for models arising in Systems Biology~\cite{fein-043}.
The \texttt{C++} package \textsc{Topcom} \cite{Rambau2002} efficiently computes chirotopes with rational arithmetic and 
generates all cocircuits. It also has an interface to the open source computer algebra system \textsc{Sage}~\cite{sage}. 
For algorithmic methods to compute sign vectors of real algebraic varieties and semialgebraic sets, we refer to~\cite{BasuPollackRoy2006}. 
The software package \textsc{RAGLib} \cite{SafeyElDin2013} can test  
whether a system of polynomial equations and inequalities has a real solution.

\paragraph{Acknowledgments.}

This project began during the Dagstuhl Seminar on
``Symbolic methods for chemical reaction networks''
held in November 2012 at Schloss Dagstuhl, Germany.  
The authors also benefited from discussions during the AIM workshop on ``Mathematical problems arising from biochemical reaction networks'' 
held in March 2013, in Palo Alto.  
EF was supported by a postdoctoral grant ``Beatriu de Pin\'os'' from the Generalitat 
de Catalunya and  the project  MTM2012-38122-C03-01/FEDER from the Ministerio de Econom\'{\i}a y Competitividad (Spain).
CC was supported by BMBF grant Virtual Liver (FKZ 0315744)
and the research focus dynamical systems of the state
Saxony-Anhalt.
AS was supported by the NSF (DMS-1004380 and DMS-1312473). 
AD was partially supported by UBACYT 20020130100207BA, CONICET PIP 11220110100580 and ANPCyT PICT-2013-1110, Argentina. 
The authors thank an anonymous referee for helpful comments.

{\footnotesize

}

\end{document}